\theoremstyle{plain}
\newtheorem{theorem}{Theorem}[section]
\newtheorem{proposition}[theorem]{Proposition}
\newtheorem{step}{Step}
\newtheorem{lemma}[theorem]{Lemma}
\newtheorem{question}[theorem]{Question}
\newtheorem{corollary}[theorem]{Corollary}
\theoremstyle{definition}
 \newtheorem{definition}[theorem]{Definition}
\newtheorem{remark}[theorem]{Remark}
\newcommand{\R}{\mathbb{R}}  
\newcommand{\Q}{\mathbb{Q}}  
\newcommand{\Z}{\mathbb{Z}} 
\newcommand{\N}{\mathbb{N}}
\newcommand{\ZG}{\mathbb{Z}G}
\newcommand{\CAT}{\mathsf{CAT}}  
\DeclareMathOperator{\cd}{\mathsf{cd}}
\DeclareMathOperator{\gd}{\mathsf{gd}}
\DeclareMathOperator{\FV}{\mathsf{FV}}
\title[Subgroups of word hyperbolic groups in dimension $2$]{Subgroups of word hyperbolic groups in rational dimension $2$}
\author{Shivam Arora}
\author{Eduardo Mart\'inez-Pedroza}
\address{Department of Mathematics and Statistics. 
Memorial University of Newfoundland. St. John's, NL. Canada}
\email{sarora17@mun.ca}
\email{eduardo.martinez@mun.ca}
\subjclass[2000]{20F67, 20F65, 20J05, 57S30, 57M60 }
\keywords{hyperbolic group, cohomological dimension, finiteness properties, homological Dehn function}
\date{\today}
\begin{document}

\begin{abstract} 
A result of Gersten states that  if $G$ is a hyperbolic group with integral cohomological dimension $\cd_{\Z}(G)=2$ then every finitely presented subgroup is hyperbolic. We generalize this result for the rational case $\cd_{\Q}(G)=2$. In particular, our result applies to the class of torsion-free hyperbolic groups $G$ with $\cd_{\Z}(G)=3$ and $\cd_{\Q}(G)=2$ discovered by Bestvina and Mess. 
\end{abstract}

\maketitle 

\section{Introduction}

The cohomological dimension $\cd_R(G)$ of a group $G$ with respect to a ring $R$ is less than or equal to $n$ if the trivial $RG$-module $R$ has a projective resolution of length $n$. Let $\Q$ denote the field of rational numbers. The main result of this note:

\begin{theorem}\label{thm:main}
Let $G$ be a hyperbolic group such that $\cd_\Q(G)\leq 2$. If $H$ is a finitely presented subgroup, then $H$ is hyperbolic. 
\end{theorem}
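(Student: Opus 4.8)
The plan is to prove that the Dehn function of $H$ is bounded above by a linear function; hyperbolicity then follows from the classical characterisation of hyperbolic groups among finitely presented groups (linear, or even subquadratic, Dehn function suffices). Since $H$ is finitely presented it is of type $FP_2(\Q)$ and admits a finite presentation $2$--complex $X$ with universal cover $\widetilde X$, so that $H$ acts freely cocompactly on $\widetilde X$, $\mathrm{Cay}(H)=\widetilde X^{(1)}$, and $H_2(\widetilde X;\Q)=\pi_2(X)\otimes\Q$. As $\Q G$ is free over $\Q H$, restriction of a length--$2$ projective $\Q G$--resolution of $\Q$ gives $\cd_\Q(H)\le\cd_\Q(G)\le 2$. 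The argument follows Gersten's proof of the integral statement, with rational coefficients replacing integral ones throughout; the crux is that $\cd_\Q(G)\le 2$ still suffices.

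\textbf{Step 1 (a linear $\Q$--homological isoperimetric inequality for $H$).} A hyperbolic group satisfies a linear homological isoperimetric inequality, via Mineyev's homological bicombing, and this is inherited by subgroups of type $FP_2(\Q)$. Concretely: choose a finite generating set of $H$ whose elements are words in a finite generating set of $G$, so an edge--loop $\gamma$ of length $n$ in $\mathrm{Cay}(H)$ maps to a $1$--cycle $\bar\gamma$ of $\ell^1$--norm $O(n)$ in the Rips complex $P$ of $G$; fill it by the bicombing with a $2$--chain $\bar c\in C_2(P;\Q)$ with $\partial\bar c=\bar\gamma$, $\|\bar c\|_1=O(n)$, supported uniformly near $\bar\gamma$; then push $\bar c$ to $C_2(\widetilde X;\Q)$ along an $H$--equivariant chain map $C_*(P;\Q)\to C_*(\widetilde X;\Q)$, which exists by comparison of resolutions and is bounded in degrees $\le 2$ because $C_*(\widetilde X;\Q)$ is finitely generated free over $\Q H$ there. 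This produces a $2$--chain $c$ with $\partial c=\gamma$ and $\|c\|_1=O(n)$, i.e.\ $\FV_{\widetilde X}^{\Q}(n)\preceq n$.

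\textbf{Step 2 (upgrading to a linear Dehn function).} This is the rational analogue of Gersten's theorem and is where $\cd_\Q(H)\le 2$ enters. From the truncated free $\Q H$--resolution $C_2(\widetilde X;\Q)\xrightarrow{\partial_2}C_1(\widetilde X;\Q)\to C_0(\widetilde X;\Q)\to\Q\to 0$ and $\cd_\Q(H)\le 2$, a dimension--shifting (Schanuel) argument shows that $H_2(\widetilde X;\Q)=\pi_2(X)\otimes\Q=\ker\partial_2$ is a projective $\Q H$--module (and $\pi_2(X)$ itself is torsion free, being a subgroup of the free abelian group $C_2(\widetilde X;\Z)$). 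Running Gersten's argument with this projectivity in place of the integral hypothesis $\cd_\Z\le 2$, one converts the linear rational homological filling of a null--homotopic word from Step 1 into a van Kampen diagram of linearly bounded area: the $2$--cells of $c$ are assembled into a planar diagram by joining them with short paths in $\mathrm{Cay}(H)$, and the rational $2$--cycles obstructing planarity are resolved at linearly bounded cost using projectivity of $H_2(\widetilde X;\Q)$ over $\Q H$. Hence $\delta_H(n)\preceq n$ and $H$ is hyperbolic.

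\textbf{Main obstacle.} The delicate part is Step 2: passing from a \emph{rational} homological filling to a \emph{homotopical, integral} one (a van Kampen diagram), because $\pi_2(X)$ need not be $\Z H$--projective --- indeed $\cd_\Z(H)$ can equal $3$, as in the Bestvina--Mess examples. The way around is to run Gersten's homological--to--homotopical conversion entirely over $\Q$, using that $H_2(\widetilde X;\Q)$ remains $\Q H$--projective, and to verify that the integrality inherent in a van Kampen diagram does not require any control on the denominators of the rational chains. A secondary point is that the subgroup inheritance in Step 1 must be established in a form valid over $\Q$; here one genuinely uses that $G$ is hyperbolic (so that the fillings can be taken uniformly local), not merely that $G$ has a linear homological isoperimetric inequality.
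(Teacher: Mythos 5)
There is a genuine gap, and it stems from placing the cohomological dimension hypothesis in the wrong step. Your Step~1 asserts, in effect, that a linear $\Q$-homological isoperimetric inequality for $G$ descends to every finitely presented subgroup $H$, and as written it never uses $\cd_\Q(G)\le 2$. If that were correct, combined with a correct Step~2 it would prove that \emph{every} finitely presented subgroup of a hyperbolic group is hyperbolic, which is false by Brady's example. The specific flaw is the claim that the comparison chain map $C_*(P;\Q)\to C_*(\widetilde X;\Q)$ is ``bounded in degrees $\le 2$ because $C_*(\widetilde X;\Q)$ is finitely generated free over $\Q H$ there.'' Boundedness (with respect to $\ell^1$/filling norms) of an $RH$-morphism is automatic when the \emph{source} is finitely generated over $RH$ --- that is exactly Lemma~\ref{lem:ProjBounded} --- but $C_*(P;\Q)$ for a Rips complex of $G$ is not finitely generated as a $\Q H$-module, so the comparison map has no reason to be bounded. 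The remark that hyperbolicity makes the fillings ``uniformly local'' does not repair this: locality of $\bar c$ near $\bar\gamma$ in $P$ is not the same as admitting a bounded $\Q H$-linear retraction onto $C_*(\widetilde X;\Q)$, and such a retraction is precisely what is missing and what the dimension hypothesis supplies.

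This is exactly where the paper uses $\cd_\Q(G)\le 2$. In Proposition~\ref{prop:MappingCylinder} one builds the mapping cylinder $M_*$ of a chain map between a finite-type $\Q H$-resolution $Q_*$ for $H$ and a finite-type, finite-length $\Q G$-resolution $P_*$ for $G$; both sides are finitely generated so the relevant maps are bounded, and $\cd_\Q(G)=n+1$ (via Schanuel's lemma, Step~1 of the proof of Theorem~\ref{thm:main2}) forces the cycle module $Z_n(M)$ to be projective, hence $Z_n(Q)$ is an $RH$-direct summand of $Z_n(M)$. That split gives the bounded retraction which transfers a $G$-filling of $\imath_n(\gamma)$ back to an $H$-filling of $\gamma$, yielding $\FV^2_{H,\Q}\preceq\FV^2_{G,\Q}$. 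Your Step~2, by contrast, is trying to reprove that ``finitely presented with linear $\FV^2_{\Q}$ implies hyperbolic'' using $\cd_\Q(H)\le 2$, but this implication needs no cohomological-dimension hypothesis at all: for finitely presented groups it is exactly Theorem~\ref{thm:char} (Mineyev; Groves--Manning), which the paper cites and uses as a black box in both directions. So the structure should be: use $\cd_\Q(G)\le 2$ to prove the subgroup inheritance $\FV^2_{H,\Q}\preceq\FV^2_{G,\Q}$ (Theorem~\ref{thm:main2}), then apply Theorem~\ref{thm:char} to $G$ and then to $H$.
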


The analogous statement for $\cd_\Z(G)$ is a result of Steve Gersten that we recover as a consequence of the inequality
\[ \cd_{\Q}(G)\leq \cd_{\Z}(G).\]

\begin{corollary}[Gersten]\cite[Theorem 5.4]{Ge96} \label{cor:Gersten}
Let $G$ be a hyperbolic group such that $\cd_\Z(G)=2$. If $H$ is a   finitely presented subgroup, then $H$ is hyperbolic.
\end{corollary}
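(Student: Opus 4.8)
Since Corollary~\ref{cor:Gersten} follows immediately from Theorem~\ref{thm:main} together with the inequality $\cd_\Q(G)\le\cd_\Z(G)$, I describe how I would prove Theorem~\ref{thm:main}. The plan is to reduce it to a linear homological isoperimetric inequality for $H$, and then to deduce that inequality from the corresponding fact for $G$ using the dimension hypothesis. First I would record two elementary reductions: cohomological dimension is non-increasing on subgroups, so $\cd_\Q(H)\le\cd_\Q(G)\le 2$; and since $H$ is finitely presented it is of type $\mathsf{FP}_2(\Q)$, so its homological Dehn function $\FV_H$ over $\Q$ — the optimal filling function for rational $1$-cycles in the universal cover of a presentation complex — is a well-defined equivalence class of functions. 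The main black box I would invoke is a homological refinement of Gromov's characterization of hyperbolicity: a finitely presented group is hyperbolic if and only if $\FV$ over $\Q$ is bounded above by a linear function (essentially due to Gersten; it combines the subquadratic gap theorem for Dehn functions with the identification of $\FV$ with degree-$2$ $\ell^\infty$-cohomology, together with the fact that a rational $2$-chain filling a loop upgrades to a van Kampen diagram at subquadratic cost). Thus the task becomes to prove $\FV_H(n)\preceq n$.

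Next I would establish a linear homological filling inside $G$. As $G$ is hyperbolic it acts properly and cocompactly on a contractible complex $P$ (a Rips complex for a large parameter), and over $\Q$ the cellular chains $C_\ast(P;\Q)$ form a resolution of the trivial module by finitely generated projective $\QG$-modules, since finite cell stabilizers are invertible in $\Q$. Because the Dehn function of $G$ is linear, $P$ enjoys a linear homological isoperimetric inequality: there is a constant $C$ so that every rational $1$-cycle $z$ in $P$ bounds a rational $2$-chain $w$ with $\|w\|_1\le C\|z\|_1$ — split $z$ into a controlled number of loops of controlled total length and fill each by a disk of linearly bounded area.

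The heart of the argument is the transfer of this inequality to $H$: I would aim to prove the monotonicity $\FV_H\preceq\FV_G$ \emph{under the hypothesis $\cd_\Q(H)\le 2$}. Fix a finite presentation complex for $H$ with universal cover $\tilde X$ and an $H$-equivariant cellular map $\phi\colon\tilde X\to P$ (available because $P$ is aspherical); it induces an $H$-equivariant Lipschitz chain map in degrees $\le 2$. A rational $1$-cycle $z$ in $\tilde X$ with $\|z\|_1\le n$ pushes to a rational $1$-cycle $\phi_\#(z)$ in $P$ of norm $\preceq n$, which by the previous step bounds a rational $2$-chain $w'$ in $P$ of norm $\preceq n$; the problem is to convert $w'$ into a rational $2$-chain in $\tilde X$ bounding $z$ of norm still $\preceq n$. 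Here is exactly where $\cd_\Q(H)\le 2$ enters: it forces the second syzygy of $\Q$ over $\QH$ — equivalently the rational homotopy module $\pi_2(\tilde X)\otimes_\Z\Q\cong H_2(\tilde X;\Q)$ — to be a \emph{finitely generated projective} $\QH$-module, so that $C_{\le 2}(\tilde X;\Q)$ is a genuinely two-dimensional homological model of $\Q$. The two models $C_{\le 2}(\tilde X;\Q)$ and $C_{\le 2}(P;\Q)$ can then be compared by $\QH$-chain maps and chain homotopies that can be kept bounded in the relevant low degrees, allowing $w'$ to be ``folded back'' into $\tilde X$ up to a bounded correction term living in finitely generated free $\QH$-modules (and hence automatically Lipschitz). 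This would give $\FV_H(n)\preceq\FV_G(n)\preceq n$, so $H$ is hyperbolic by the first paragraph.

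The step I expect to be the main obstacle is the boundedness claimed in that comparison. There is in general \emph{no} Lipschitz map, nor Lipschitz chain map, from $P$ back to $\tilde X$: $H$ need not act cocompactly on $P$ and may be exponentially distorted in $G$ — finitely presented hyperbolic subgroups of hyperbolic groups need not be quasiconvex — so a naive pull-back of $w'$ blows up the norm. Making the comparison bounded should require interlacing the geometry of the hyperbolic group $G$ (thinness of fillings, a bounded homological bicombing in the spirit of Mineyev) with the algebra supplied by the dimension bound (projectivity and finite generation of $\pi_2(\tilde X)\otimes_\Z\Q$ as a $\QH$-module); this is precisely the point where $\cd_\Q(G)\le 2$ is indispensable, and it is what fails in higher dimension, as Brady's finitely presented non-hyperbolic subgroups of hyperbolic groups show.
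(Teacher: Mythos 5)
Your first sentence is exactly the paper's proof of Corollary~\ref{cor:Gersten}: it is an immediate consequence of Theorem~\ref{thm:main} and the inequality $\cd_{\Q}(G)\leq\cd_{\Z}(G)$, so that part is correct and identical. Since you then sketch a proof of Theorem~\ref{thm:main}, let me comment on that. You correctly identify both of the paper's ingredients: the Mineyev--Gersten characterization (Theorem~\ref{thm:char}) that a finitely presented group is hyperbolic if and only if $\FV^2_{G,\Q}$ is linearly bounded, and the need for a monotonicity result $\FV^2_{H,\Q}\preceq\FV^2_{G,\Q}$ (Theorem~\ref{thm:main2}). However, your argument for the monotonicity has a genuine gap, which you yourself flag, and it is precisely the gap the paper's construction is designed to close.

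You try to compare the chain complexes $C_{\le 2}(\widetilde X;\Q)$ and $C_{\le 2}(P;\Q)$ directly via an $H$-equivariant map $\phi$ and then ``fold back'' a filling $w'$ from the Rips complex $P$ into $\widetilde X$. As you observe, there is no bounded $\QH$-chain retraction $C_*(P;\Q)\to C_*(\widetilde X;\Q)$ in general, because $H$ need not be quasiconvex in $G$; knowing that $Z_2(\widetilde X;\Q)$ is finitely generated projective over $\QH$ (which, incidentally, uses $\cd_\Q(H)\le 2$ \emph{together with} $H$ being $\Q$-$FP_2$, not the dimension bound alone) does not by itself produce a bounded comparison. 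The paper avoids this altogether via Proposition~\ref{prop:MappingCylinder}: rather than comparing $\widetilde X$ with $P$, it takes a finite-length projective $\QG$-resolution $P_*$ (this is where $\cd_\Q(G)\le 2$, not $\cd_\Q(H)$, is the operative hypothesis), induces up the $\QH$-resolution $Q_*$ to $\QG\otimes_{\QH}Q_*$, chooses a $\QG$-chain map to $P_*$, and forms the algebraic mapping cylinder $M_*$. This yields a new finite-type $\QG$-resolution into which $Q_*$ embeds as an $\QH$-direct summand in every degree (via $q\mapsto(0,e\otimes q,0)$), with a splitting that is automatically bounded by the norm-equivalence lemmas (Lemmas~\ref{lem:ProjBounded}, \ref{lem:inclusion}, and~\ref{prop:RetractionLemma}), because everything in sight is finitely generated. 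The remaining point, that $Z_n(Q)$ is a direct summand of $Z_n(M)$ over $\QH$, follows because $Z_n(S)$ is projective, and for that one needs $Z_n(M)$ to be finitely generated projective over $\QG$ --- this is where Schanuel's lemma and the finite length of the $\QG$-resolution (hence $\cd_\Q(G)\le n+1$) enter. So the role you assign to the dimension hypothesis is roughly right in spirit but is applied to the wrong group and the wrong complex, and the missing idea is to replace the geometric ``fold back'' with an algebraic mapping-cylinder resolution in which boundedness is structural rather than something to be established.
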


The first motivation to generalize Gersten's result to the rational case is the existence of hyperbolic groups of integral cohomological dimension three and rational cohomological dimension two. The nature of finitely presented subgroups of groups in this class was not known. The first examples of such groups  were discovered by Bestvina and Mess~\cite{BeMe91} based on methods by Davis and Januszkiewicz~\cite{DaJa91}. The class also  contains finite index subgroups of hyperbolic Coxeter groups, examples that were discovered by Dranishnikov~\cite[Corollary 2.3]{Dr99}. We recall the nature of Bestvina-Mess examples in the following corollary. 

\begin{corollary}\cite{BeMe91}
Let $X$ be a finite  polyhedral $3$-complex such that
\begin{itemize}
\item $X$ admits piecewise constant negative curvature cellular structure satisfying Gromov's link condition, and
\item $X$ is a $3$-manifold (without boundary) in the complement of a single vertex whose link is a non-orientable closed surface. 
\end{itemize}
If $G=\pi_1 X$ then $\cd_\Q(G)=2$, $\cd_\Z(G)=3$ and any finitely presented subgroup of $G$ is hyperbolic.
\end{corollary}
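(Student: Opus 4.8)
We sketch the argument.

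The plan is to deduce the corollary from Theorem~\ref{thm:main}: I will check that $G=\pi_1 X$ is a hyperbolic group with $\cd_\Q(G)\le 2$, and then the statement about finitely presented subgroups follows at once. The sharper assertions $\cd_\Z(G)=3$ and $\cd_\Q(G)=2$ are the content of Bestvina--Mess~\cite{BeMe91}, building on Davis--Januszkiewicz~\cite{DaJa91}; I recall the mechanism below, but only the inequality $\cd_\Q(G)\le 2$ is needed to invoke Theorem~\ref{thm:main}.

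To see that $G$ is hyperbolic: a finite polyhedral complex equipped with a piecewise constant negative curvature metric in which all vertex links satisfy Gromov's link condition is locally $\CAT(-1)$, so the universal cover $\widetilde X$ is a complete, simply connected, locally $\CAT(-1)$ space, hence globally $\CAT(-1)$ by the Cartan--Hadamard theorem. The deck action of $G$ on $\widetilde X$ is proper, cocompact, isometric and free; by the \v{S}varc--Milnor lemma $G$ is quasi-isometric to $\widetilde X$, which is Gromov hyperbolic, so $G$ is a hyperbolic group, and since $\widetilde X$ is contractible, $G$ is torsion-free and $X$ is a finite $K(G,1)$. In particular $G$ is of type $\mathsf{F}$ (hence of type $\mathsf{FP}$ over $\Z$ and $\Q$) and $\cd_\Z(G)\le\dim X=3$.

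For the cohomological dimensions I would use the identification $H^n(G;RG)\cong H^n_c(\widetilde X;R)$ for $R\in\{\Z,\Q\}$ (valid since $\widetilde X$ is the universal cover of a finite aspherical complex), together with the fact that for a group of type $\mathsf{FP}$ over $R$ with $\cd_R(G)<\infty$ one has $H^{\cd_R(G)}(G;RG)\neq 0$. Write $C\subseteq\widetilde X$ for the $G$-orbit of a lift of the exceptional vertex $v$, whose link is the non-orientable closed surface $S$, and put $U=\widetilde X\setminus C$; then $C$ is discrete and closed, so the long exact sequence of compactly supported cohomology for the open set $U$ and the closed set $C$ collapses (as $\dim C=0$) to an isomorphism $H^3_c(\widetilde X;R)\cong H^3_c(U;R)$. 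Now $U$ is a connected open $3$-manifold (a cover of the open $3$-manifold $X\setminus\{v\}$, connected because $\pi_1(X\setminus\{v\})\to\pi_1 X$ is onto), and the decisive point is that $U$ is \emph{non-orientable}: near each point of $C$ it contains an open set homeomorphic to $S\times(0,1)$, which is non-orientable because $S$ is. Twisted Poincar\'e duality then gives $H^3_c(U;R)\cong H_0(U;\mathcal O_U\otimes R)\cong R/2R$, i.e. $H^3(G;\Z G)\cong\Z/2\Z$ and $H^3(G;\Q G)=0$. Combined with $\cd_\Z(G)\le 3$ this yields $\cd_\Z(G)=3$; and $\cd_\Q(G)\le\cd_\Z(G)=3$ together with $H^3(G;\Q G)=0$ forces $\cd_\Q(G)\le 2$, while $\cd_\Q(G)\ge 2$ because a torsion-free group of rational cohomological dimension $\le 1$ is free by a theorem of Dunwoody, which is incompatible with $\cd_\Z(G)=3$. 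Hence $\cd_\Q(G)=2$.

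With $G$ hyperbolic and $\cd_\Q(G)\le 2$, Theorem~\ref{thm:main} applies and every finitely presented subgroup of $G$ is hyperbolic, which completes the proof. The one genuinely delicate step is recognizing $U=\widetilde X\setminus C$ as a non-orientable open $3$-manifold and running the twisted-duality computation that separates the integral and rational coefficients---this is exactly the Bestvina--Mess phenomenon; verifying the hypotheses of Theorem~\ref{thm:main} is otherwise routine.
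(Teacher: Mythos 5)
Your proof is correct and follows the same route the paper intends: recognize $G$ as the fundamental group of a compact locally $\CAT(-1)$ complex (hence Gromov hyperbolic and torsion-free with a finite $3$-dimensional $K(G,1)$), cite the Bestvina--Mess computation $H^3(G;\Z G)\cong\Z/2\Z$ and $H^3(G;\Q G)=0$ to get $\cd_\Z(G)=3$ and $\cd_\Q(G)=2$, and then apply Theorem~\ref{thm:main}. The paper states this corollary by citation to~\cite{BeMe91} without reproducing the twisted Poincar\'e duality calculation, so your reconstruction of that step (removing the orbit $C$ of the singular vertex, identifying $H^3_c(\widetilde X;R)\cong H^3_c(U;R)$, and evaluating the orientation-twisted $H_0$ of the non-orientable open $3$-manifold $U$) supplies exactly the details the paper delegates to the reference.
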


The statement of Corollary~\ref{cor:Gersten} is sharp in the sense that there exist  hyperbolic groups of integral cohomological dimension three containing finitely presented subgroups that are not hyperbolic, the first example was found by Noel Brady~\cite{Br99}. More recently, infinite families of hyperbolic groups of integral cohomological dimension three containing non-hyperbolic finitely presented subgroups have been constructed, see for example~\cite{Kr18}.

\begin{corollary}
If $G$ is a hyperbolic group such that $\cd_\Z(G)=3$ and it contains a non-hyperbolic  finitely presented subgroup, then $\cd_{\Q}(G) = \cd_{\Z}(G)$.
\end{corollary}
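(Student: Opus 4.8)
The plan is to argue by contraposition, with Theorem~\ref{thm:main} doing all the work. First I would record the standing general inequality $\cd_\Q(G)\le\cd_\Z(G)$, which for the group $G$ in the hypothesis gives $\cd_\Q(G)\le 3$. Since $G$ contains a non-hyperbolic finitely presented subgroup, $G$ is in particular infinite, so $\cd_\Q(G)\ge 1$; thus $\cd_\Q(G)\in\{1,2,3\}$ and it suffices to exclude the two smaller values.

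The key step is to rule out $\cd_\Q(G)\le 2$. If this inequality held, then Theorem~\ref{thm:main} would apply to $G$ and force every finitely presented subgroup of $G$ to be hyperbolic, contradicting the hypothesis that $G$ has a non-hyperbolic finitely presented subgroup. Therefore $\cd_\Q(G)=3$, and combined with $\cd_\Z(G)=3$ this yields $\cd_\Q(G)=\cd_\Z(G)$, as claimed.

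I do not expect any genuine obstacle here: all the mathematical content is contained in Theorem~\ref{thm:main}, and this corollary is a purely logical consequence of it together with the dimension inequality. The only point worth stating carefully is that Theorem~\ref{thm:main} is phrased with the hypothesis $\cd_\Q(G)\le 2$ rather than $\cd_\Q(G)=2$, so the argument above simultaneously handles the (vacuous for hyperbolic groups with a non-hyperbolic subgroup, but harmless) case $\cd_\Q(G)\le 1$ without any additional input.
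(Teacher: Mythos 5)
Your proof is correct and matches the argument the paper clearly intends (the paper states this corollary without proof, as an immediate consequence of Theorem~\ref{thm:main} and the inequality $\cd_\Q(G)\le\cd_\Z(G)$). The contrapositive application of Theorem~\ref{thm:main} to rule out $\cd_\Q(G)\le 2$, combined with the upper bound from $\cd_\Z$, is exactly the expected reasoning.
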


A second motivation of this project was to generalize Gersten's result to groups admitting torsion, specifically, to the class of hyperbolic groups $G$ admitting a $2$-dimensional classifying space for proper actions $\underline{E}G$. Recall that a model for $\underline{E}G$ is a $G$-CW-complex $X$  with the property that for each subgroup $H$ the subcomplex of fixed points is contractible if $H$ is finite, and empty if $H$ is infinite. The minimal dimension of a model for $\underline{E}G$ is denoted by $\underline{\gd}(G)$. Considering the cellular chain complex with rational coefficients of a model for $\underline{E}G$ with minimal dimension shows that
\[ \cd_\Q (G) \leq \underline{\gd}(G).\]
This inequality implies the following corollary. 

\begin{corollary}\label{cor:ProperActions}
If $G$ is a hyperbolic group such that $\underline{\gd}(G)\leq 2$, then any finitely presented subgroup is hyperbolic. 
\end{corollary}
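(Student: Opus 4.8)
The plan is to deduce Corollary~\ref{cor:ProperActions} from Theorem~\ref{thm:main} by establishing the inequality $\cd_\Q(G)\leq\underline{\gd}(G)$ indicated above; verifying this inequality is the entire content of the argument. First I would fix a model $X$ for $\underline{E}G$, realized as a $G$-CW-complex, with $\dim X=\underline{\gd}(G)\leq 2$. By definition of a classifying space for proper actions, the fixed set of the trivial subgroup is contractible, i.e.\ $X$ itself is contractible, and every cell stabilizer is a finite subgroup of $G$. Choosing one representative in each $G$-orbit of cells expresses the cellular chain modules over $\QG$ as
\[ C_i(X;\Q)\;\cong\;\bigoplus_{j}\Q[G/F_{ij}],\qquad F_{ij}\ \text{finite}, \]
and contractibility of $X$ makes the augmented cellular chain complex
\[ 0\longrightarrow C_2(X;\Q)\longrightarrow C_1(X;\Q)\longrightarrow C_0(X;\Q)\longrightarrow\Q\longrightarrow 0 \]
an exact sequence of $\QG$-modules.

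The key point, and the only place where the hypotheses are genuinely used, is that each $\Q[G/F]\cong\QG\otimes_{\Q F}\Q$ with $F$ finite is a projective $\QG$-module. Since $|F|$ is invertible in $\Q$, the averaging idempotent $e=\frac{1}{|F|}\sum_{f\in F}f\in\Q F$ exhibits the trivial module $\Q$ as a direct summand of the free module $\Q F$, so $\QG\otimes_{\Q F}\Q$ is a direct summand of $\QG\otimes_{\Q F}\Q F\cong\QG$; direct sums of such modules remain projective. Hence the displayed exact sequence is a projective resolution of the trivial $\QG$-module $\Q$ of length at most $2$, which proves $\cd_\Q(G)\leq 2$. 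Since $G$ is hyperbolic, Theorem~\ref{thm:main} then yields that every finitely presented subgroup of $G$ is hyperbolic, as desired.

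I expect no serious obstacle: once Theorem~\ref{thm:main} is available the argument is essentially bookkeeping with the $G$-CW structure. The one substantive input is the projectivity of $\Q[G/F]$, which is exactly what fails for $\Z[G/F]$ when $F$ is a nontrivial finite group; this is precisely why the conclusion is phrased via $\cd_\Q$ and cannot be obtained by a direct appeal to Corollary~\ref{cor:Gersten} — a two-dimensional $\underline{E}G$ controls the rational, but not necessarily the integral, cohomological dimension of $G$.
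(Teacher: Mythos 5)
Your argument is correct and follows the paper's own route: the paper proves this corollary by remarking that the rational cellular chain complex of a minimal-dimensional model for $\underline{E}G$ gives $\cd_\Q(G)\leq\underline{\gd}(G)$ and then invoking Theorem~\ref{thm:main}. You have simply filled in the details the paper leaves implicit, correctly identifying the key point (projectivity of $\Q[G/F]$ for $F$ finite via the averaging idempotent) that makes the rational, but not integral, cohomological dimension bounded by $\underline{\gd}(G)$.
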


The statement of Corollary~\ref{cor:ProperActions} was known in the following cases: 
\begin{itemize}
    \item If $G$ admits a $\CAT(-1)$ $2$-dimensional model for $\underline{E}G$, see~\cite[Corollary 1.5]{HaMa14}.  
    \item If $G$ admits a $2$-dimensional model for $\underline{E}G$, and $H$ is finitely presented with finitely many conjugacy classes of finite groups,  a consequence of~\cite[Theorem 1.3]{Ma17}.  
    \item If $G$ is a hyperbolic small cancellation group of type $C(7)$, $C(5){-}T(4)$, $C(4){-}T(5)$, $C(3){-}T(7)$ or $C'(1/6)$, see~\cite[Theorem 7.6]{Ge96}. 
\end{itemize}
We remark that for a group $G$ satisfying the hypothesis of Corollary~\ref{cor:ProperActions}, the conclusion follows from Gersten's result~\ref{cor:Gersten} if, in addition, $G$ is assumed to be virtually torsion free.  It is an outstanding question whether hyperbolic groups are virtually torsion free~\cite{KaWi00}.

\begin{remark}
During the refereing process of this manuscript, a generalization of Theorem~\ref{thm:main} was proved in context of totally disconnected locally compact hyperbolic groups~\cite{ACCM19}. 
\end{remark}

\subsection*{Homological filling functions and the Proof of Theorem~\ref{thm:main} }

Let $R$ be a subgring of $\Q$. The $(n+1)$-dimensional homological Filling Volume function over $R$ of a cellular complex $X$ is a function $\FV_{X, R}^{n+1} \colon \N \to \R$  describing the minimal volume required to fill integral cellular $n$-cycles with cellular $(n+1)$-chains with coefficients in $R$.
 
For a group $G$ with a $K(G,1)$ model $X$ with finite $(n+1)$-skeleton, the $(n+1)$-dimensional homological Filling Volume function over $R$ of $G$, denoted by $\FV_{G, R}^{n+1}$, is defined as $\FV_{\tilde X, R}^{n+1}$ where $\tilde X$ is the universal cover of $X$. This function depends only of the group $G$ up to the equivalence relation on the set of non-decreasing  functions $\N\to\R$ defined as $f\sim g$ if and only if $f\preceq g$ and $g\preceq f$, where $f\preceq g$ means there is $C>0$ such that for all $k\in\N$, 
 \[ f(k) \leq Cg(Ck+C)+Ck+C.\]

Recall that a group $G$ is of type $R\mbox{-}FP_n$ if the trivial $RG$-module $R$ admits a partial projective resolution 
 \[ P_n \to \cdots  \to P_2\to P_1\to P_0\to R\to 0\]
 where each $P_i$ is a finitely generated $RG$-module.  In~\cite{HM16}, it is shown that to define $\FV_{G, \Z}^{n+1}$ is enough to assume that the group $G$ is of type $\Z\mbox{-}FP_{n+1}$. We prove that the same statement holds for $\FV_{G, R}^{n+1}$ in Section~\ref{sec:defintion}. The main technical result of this note is the following.
 
\begin{theorem}\label{thm:main2}
Let $R$ be a subring of $\Q$. Let $G$ be a group of type $R \mbox{-}FP_{n+1}$ and suppose $\cd_R(G)=n+1$.  Let $H \leq  G$ be a subgroup of type $R\mbox{-}FP_{n+1}$. Then there is a constant $C>0$ such that for all $k$
\[ FV_{H, R}^{n+1} \preceq FV_{G, R}^{n+1}.\]
\end{theorem}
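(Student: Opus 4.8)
The plan is to reduce the statement to a homological–algebraic comparison of finitely generated free resolutions and then to transport fillings of $n$-cycles from $G$ to $H$.

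\textbf{Algebraic set-up.} By the results of Section~\ref{sec:defintion} (the version over $R$ of \cite{HM16}), $\FV^{n+1}_{G,R}$ and $\FV^{n+1}_{H,R}$ may be computed, up to $\simeq$, from any free resolutions of the trivial module that are finitely generated through degree $n+1$, by taking the suprema of the $\ell^1$-norms (with respect to fixed bases) of the fillings of integral $n$-cycles of bounded norm. Since $G$ is of type $R\mbox{-}FP_{n+1}$ and $\cd_R(G)=n+1$, the standard syzygy argument produces a finitely generated \emph{free} $RG$-resolution $C_\ast$ of $R$ of length $n+1$; then $\partial^C_{n+1}$ is injective, so it identifies $C_{n+1}$ with the module of $n$-cycles $Z_n:=\ker\partial^C_n$, which is therefore finitely generated and free over $RG$, and every integral $n$-cycle of $C_n$ has a \emph{unique} filling. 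As $\cd_R(H)\le\cd_R(G)=n+1$, if $\cd_R(H)\le n$ then $\FV^{n+1}_{H,R}\simeq 0$ and there is nothing to prove; otherwise I likewise fix a finitely generated free $RH$-resolution $D_\ast$ of $R$ of length $n+1$, with $\partial^D_{n+1}$ injective.

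\textbf{Pushing a cycle into $G$ and filling it there.} Restricting scalars, $C_\ast$ is a (non–finitely generated) free $RH$-resolution of $R$. Fix an $RH$-chain map $\phi\colon D_\ast\to C_\ast$ lifting $\operatorname{id}_R$; since $D_\ast$ is finitely generated, $\phi$ is Lipschitz and the coefficients of $\phi$ have a common bounded denominator. Given an integral $n$-cycle $z$ in $D_n$, the $n$-cycle $\phi_n(z)$ of $C_n$ satisfies $\|\phi_n(z)\|_1\preceq\|z\|_1$, and after clearing the (bounded) denominator, filling the resulting integral cycle in $C_\ast$, and rescaling, one gets $w_G:=(\partial^C_{n+1})^{-1}(\phi_n(z))\in C_{n+1}$ with $\|w_G\|_1\preceq\FV^{n+1}_{G,R}(\|z\|_1)$. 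It now suffices to produce from $w_G$ a filling $w_H\in D_{n+1}$ of $z$ with $\|w_H\|_1\preceq\|w_G\|_1+\|z\|_1$: taking suprema over integral $n$-cycles $z$ with $\|z\|_1\le k$ would then give $\FV^{n+1}_{H,R}(k)\le A\,\FV^{n+1}_{G,R}(Ak+A)+Ak+A$ for a suitable constant $A$, i.e.\ $\FV^{n+1}_{H,R}\preceq\FV^{n+1}_{G,R}$, the linear error term being permitted by the very definition of $\preceq$.

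\textbf{The main obstacle: pulling the filling back to $H$.} The naive idea is to apply a homotopy inverse $\psi\colon C_\ast\to D_\ast$ of $\phi$: using $\psi\phi\simeq\operatorname{id}_{D_\ast}$ via an $RH$-chain homotopy $s$, one computes $\partial^D_{n+1}\bigl(\psi_{n+1}(w_G)-s_n(z)\bigr)=\psi_n\phi_n(z)-\partial^D_{n+1}s_n(z)=z$, so $w_H:=\psi_{n+1}(w_G)-s_n(z)$ does fill $z$, and $\|s_n(z)\|_1\preceq\|z\|_1$ is harmless since $D_\ast$ is finitely generated. The difficulty is that $\|\psi_{n+1}(w_G)\|_1$ is uncontrolled, because \emph{no} $RH$-chain map $C_\ast\to D_\ast$ can be Lipschitz: already in degree one the boundaries of the infinitely many $RH$-basis $1$-cells of $C_\ast$ are $0$-cycles of bounded norm but of unbounded ``spread'' in $D_\ast$, forcing their fillings, hence $\psi_1$, hence all higher $\psi_i$, to grow. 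This is exactly where the hypothesis $\cd_R(G)=n+1$ is indispensable — it is what makes $Z_n$ finitely generated and free over $RG$ — and I expect the cure to be to replace the non-Lipschitz global chain map by a Lipschitz $RH$-linear ``transfer'' defined only on cycles. Concretely, the key claim I would aim to establish is that $\phi_n$ restricts to a \emph{split} monomorphism $Z_n(D_\ast)\hookrightarrow Z_n$ of $RH$-modules (a generalized Schanuel argument from the homotopy equivalence $\phi$, using that the restriction to $RH$ of the $RG$-projective $Z_n$ is $RH$-projective), and moreover that, because $Z_n$ is finitely generated and free over $RG$, a left inverse can be chosen that is given by a finite matrix over $RG$ and is therefore Lipschitz; applying this left inverse to $w_G\in C_{n+1}\cong Z_n$, after a chain-homotopy correction landing in $Z_n(D_\ast)=B_n(D_\ast)$, yields the required $w_H$. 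Proving this split monomorphism together with a controlled left inverse is the heart of the proof and, I expect, its main difficulty; it also accounts for the fact that the conclusion genuinely fails once $\cd_R(G)>n+1$, as in Brady's examples~\cite{Br99}.

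\textbf{Conclusion.} Granting the transfer step, one has $\min\{\|w\|_1:\partial^D_{n+1}w=z\}\le\|w_H\|_1\preceq\FV^{n+1}_{G,R}(\|z\|_1)+\|z\|_1$, and the supremum over integral $n$-cycles $z$ of norm at most $k$ gives $\FV^{n+1}_{H,R}\preceq\FV^{n+1}_{G,R}$ as explained above. Throughout, the bookkeeping needed to move between integral $n$-cycles and $R$-cycles of bounded denominator — so that $\phi_n(z)$ and the intermediate objects can legitimately be fed into the filling functions — is precisely what is developed in Section~\ref{sec:defintion}.
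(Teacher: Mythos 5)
You have correctly identified the overall shape of the argument (push cycles into a resolution for $G$, fill there cheaply, return the filling to $H$) and, crucially, the right obstruction: a homotopy inverse $\psi\colon C_\ast\to D_\ast$ cannot be Lipschitz. But the step you flag as the heart of the proof is left unproved, and the specific claim you propose is not quite right and would not follow from the Schanuel-type argument you sketch. Applying Schanuel's lemma (over $RH$) to the two resolutions $C_\ast$ and $D_\ast$ gives only $Z_n(D_\ast)\oplus C_n\oplus D_{n-1}\oplus\cdots\cong Z_n(C_\ast)\oplus D_n\oplus C_{n-1}\oplus\cdots$ — a stable isomorphism — and does \emph{not} exhibit $Z_n(D_\ast)$ as a direct summand of $Z_n(C_\ast)$, nor does it make $\phi_n$ injective on $Z_n(D_\ast)$ (for a general lift $\phi$ of $\operatorname{id}_R$ there is no reason for $\phi_n$ to be injective on $n$-cycles). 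Moreover, even granting a split, the left inverse is an $RH$-linear map whose source $Z_n(C_\ast)$ is not finitely generated over $RH$, so it cannot be ``a finite matrix over $RG$'' — the Lipschitz control needs a separate argument.

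The missing idea, which is the actual content of the paper's proof, is the algebraic \emph{mapping cylinder} (Proposition~\ref{prop:MappingCylinder}). One induces up the $RH$-resolution $Q_\ast$ to $RG\otimes_{RH}Q_\ast$, maps it to the finite-length $RG$-projective resolution $P_\ast$, and replaces $P_\ast$ by the mapping cylinder $M_\ast$ of that chain map. Then $M_\ast$ is still a finite-type $RG$-projective resolution of $R$ (homotopy equivalent to $P_\ast$), and by construction each $Q_i$ sits inside $M_i$ as an $RH$-direct summand via $\imath_i$. The short exact sequence $0\to Q_\ast\to M_\ast\to S_\ast\to 0$ of $RH$-complexes, together with the vanishing of $H_i$ in the relevant range, then shows that $Z_n(Q_\ast)\hookrightarrow Z_n(M_\ast)$ is also a split $RH$-monomorphism and that $Z_n(M_\ast)$ is finitely generated $RG$-projective (Schanuel applied \emph{over $RG$} to $M_\ast$). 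Finally, the Lipschitz control of the retraction is obtained from Lemma~\ref{prop:RetractionLemma}: on an $RH$-direct summand $N$ of a finitely generated projective $RG$-module $M$, the $RH$-filling norm on $N$ is equivalent to the restriction of the $RG$-filling norm on $M$. This replaces your proposed ``matrix over $RG$'' step. You would also need the integral-parts machinery (Lemma~\ref{lem: integral part H}) to make the ``bounded denominator'' bookkeeping precise over a general subring $R\subseteq\Q$; you allude to it but do not carry it out. As written, then, the proposal reduces the theorem to an unproved (and, in the form stated, incorrect) key claim, and omits the mapping-cylinder construction that actually delivers it.
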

This theorem generalizes the main result of~\cite{HM16}, by considering an arbitrary subgring of the rational numbers  instead of only the ring of integers, and by replacing the topological assumptions $F_{n+1}$ on $G$ and $H$ with the weaker hypothesis $R\mbox{-}FP_{n+1}$. 

The main result of this note, Theorem~\ref{thm:main}, is a consequence of Theorem~\ref{thm:main2} and the characterization of hyperbolic groups stated below, which is credited to Gersten~\cite{Ge96a}. This characterization was revised by Mineyev~\cite[Theorem 7, statements (0) and (2)]{Mi02}, and it  was also revisited by Groves and Manning in~\cite[Theorem 2.30]{GrMa08}.  

\begin{theorem}\cite[Theorem 7]{Mi02} \cite[Theorem 2.30]{GrMa08}\label{thm:char}
A group $G$ is hyperbolic if and only if $G$ is finitely presented and the rational filling function $\FV_{G, \Q}^2$ is bounded by a linear function, i.e., $\FV_{G, \Q}^2(k) \preceq k$.
\end{theorem}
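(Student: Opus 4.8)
The forward direction is routine, while the reverse direction is the classical characterization of hyperbolicity by a linear homological isoperimetric inequality, due to Gersten and refined by Mineyev and by Groves--Manning; I would organize the proof as follows.

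For $(\Rightarrow)$: if $G$ is hyperbolic it is finitely presented with linear Dehn function $\delta_G$. Fix a finite presentation, work in the universal cover $\widetilde X$ of its presentation $2$-complex, and let $\Gamma=\widetilde X^{(1)}$ be the Cayley graph. An integral $1$-cycle $z$ in $\Gamma$ of $\ell^1$-mass $m$ admits, by an Eulerian decomposition in $\Gamma$, a presentation as a sum of closed edge-paths of total length at most $m$: replacing each oriented edge $e$ by $|n_e|$ copies yields a balanced multigraph, which decomposes into directed cycles using all $m$ edges. Filling each such loop by a van Kampen diagram of area at most $\delta_G(\text{its length})$ and adding the associated fundamental $2$-chains gives $c$ with $\partial c=z$ and $\|c\|\le\sum_i\delta_G(\ell_i)\le Cm$ by linearity of $\delta_G$. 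Hence $\FV^2_{G,\Z}(m)\preceq m$, and since every integral filling is a rational filling, $\FV^2_{G,\Q}(m)\le\FV^2_{G,\Z}(m)\preceq m$.

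For $(\Leftarrow)$: assume $G$ is finitely presented and $\FV^2_{G,\Q}(k)\preceq k$. Since rational — equivalently real — $2$-chains are allowed, one first records a uniform $\R$-homological isoperimetric inequality for the presentation complex: there is a constant $K$ so that every $1$-cycle $z$ of $\widetilde X^{(1)}$ bounds an $\R$-$2$-chain $c$ with $\operatorname{mass}(c)\le K\|z\|$. It then suffices to show that geodesic triangles in $\Gamma$ are uniformly thin, for then $\Gamma$, hence the quasi-isometric group $G$, is Gromov-hyperbolic. Given a geodesic triangle with sides $\alpha,\beta,\gamma$ and perimeter $L$, its boundary loop is a $1$-cycle of mass $L$, so it bounds an $\R$-$2$-chain $c$ with $\operatorname{mass}(c)\le KL$. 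The linear bound is propagated along the triangle by a recursive slicing argument in the spirit of the combinatorial proof of Gromov's theorem: cut $c$ along a short $1$-chain joining two points of the triangle (say a geodesic from a point of $\alpha$ to a nearby point of $\beta\cup\gamma$), obtaining two $2$-chains filling shorter cycles to which the linear bound again applies, and iterate; the total mass stays linear in $L$, which forces each point of one side to lie within a bounded distance — depending only on $K$ and the presentation — of the union of the other two sides. I expect the delicate point, and the reason rational rather than integral coefficients are needed, to be exactly this ``cutting'' step: slicing a $2$-chain along a $1$-chain naturally introduces fractional multiplicities, so one must work over $\Q$ (or $\R$) throughout. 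Making this precise — or, dually, proving the equivalent statement that the comparison map $H^2_b(G;\R)\to H^2(G;\R)$ is surjective, or producing a bounded $\R$-homological bicombing of $\Gamma$ — is carried out in \cite{Mi02} and \cite{GrMa08}, which I would follow for the details of this step.
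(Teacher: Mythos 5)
The paper does not prove this statement at all; it is imported as a black box from Mineyev and Groves--Manning, so there is no ``paper's own proof'' to compare your sketch against. What can be assessed is whether your sketch is internally sound and whether it accurately reflects the cited arguments.

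Your forward direction is correct and standard: an integral $1$-cycle of $\ell^1$-mass $m$ in the Cayley graph decomposes, by the Eulerian argument you give, into closed edge-paths of total length $m$; filling each with a van Kampen diagram of area controlled by the linear Dehn function and summing the $2$-chains gives $\FV^2_{G,\Z}(m)\preceq m$, and rational fillings cost no more than integral ones. The reverse direction, however, misrepresents both the method and where the difficulty lies. The ``recursive slicing'' heuristic is not how either cited source argues, and as written it does not close: cutting a real-coefficient $2$-chain $c$ along a geodesic does not in general produce two canonically defined sub-chains whose masses sum to at most $\mathrm{mass}(c)$ and whose boundaries are the two halves of the triangle loop, and even granting such a decomposition you give no reason why a linear total-mass bound forces pointwise thinness (mass need not be concentrated near the sides). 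You also have the delicacy backwards: the $\Z$-coefficient statement is the \emph{easy} one (Gersten), because an integral $2$-filling can be realized by a sum of van Kampen diagrams, giving a linear homotopical Dehn function directly; the $\Q$/$\R$ version used in this paper is strictly harder precisely because a rational filling does not realize as a disk diagram, not because ``slicing introduces fractions.'' The cited proofs resolve this by entirely different means --- Mineyev via bounded cohomology and $\ell^1$-homological bicombings, Groves--Manning via a combinatorial argument controlling the ordinary Dehn function --- rather than by slicing a filling $2$-chain. Deferring to the references is appropriate here, but the bridging intuition you offer would mislead a reader about what the real obstacle is and how it is overcome.
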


\begin{proof}[Proof of Theorem~\ref{thm:main}]
Let $G$ be a hyperbolic group such that $\cd_\Q(G)=2$, and let $H$ be  a finitely presented subgroup.  Theorem~\ref{thm:char} implies that $\FV_{G, \Q}^2$ is bounded by a linear function.  By Theorem~\ref{thm:main2},   $\FV_{H, \Q}^2 \preceq \FV_{G, \Q}^2$. It follows $\FV_{H, \Q}^2$ is bounded by a linear function. Then Theorem~\ref{thm:char} implies that $H$ is a hyperbolic group. 
\end{proof}

In view of Theorem~\ref{thm:char}, we raised the following question.
\begin{question}
	Let $G$ be a $\Q\mbox{-}FP_2$ group and suppose $FV_{G,\Q}^2$ is bounded by a linear function. Is $G$ a hyperbolic group?
\end{question}
The analogous question obtained by replacing $\Q$ with $\Z$ is known to have a positive answer~\cite[Theorem 5.2]{Ge96}. One motivation behind this question is that a positive answer would imply that in  Theorem\ref{thm:main}, $H$ can be assumed to be $\Q\mbox{-}FP_2$ instead of being finitely presented. Recall that $\Q\mbox{-}FP_2$ condition is weaker than being finitely presented, see the examples in~\cite{BeBr97}.

 The rest of the note is devoted to the definition of homological filling function  and the proof of Theorem~\ref{thm:main2}. The argument is relatively self-contained, and uses and simplifies ideas from~\cite{HM16}. The main contributions of the article beside the results stated above are:
\begin{enumerate}
\item The definition of filling functions for arbitrary subdomains of the rationals, since the definition in~\cite{HM16} does not generalize directly, and 

\item  the replacement of topological arguments in~\cite{HM16} by algebraic ones that allow us to prove certain statements under the weaker homological finiteness condition $R\mbox{-}FP_{n+1}$ instead of the topological assumption $F_{n+1}$; see Proposition~\ref{prop:MappingCylinder} which is a construction based on the homological mapping cylinders,  and  Remark~\ref{rem:FH}.   
\end{enumerate}

\subsection*{Organization} Preliminary definitions are included in Section~\ref{sec:preliminaries}, specifically the notions of filling norms and bounded morphisms on modules over arbitrary normed rings.  Section~\ref{sec:defintion}  discusses the generaliation of  homological filling functions defined over  arbitrary subdomains of the rational numbers. The last section contains the proof of  Theorem~\ref{thm:main2}. 

\subsection*{Acknowledgments.} The authors thank Mladen Bestvina, Ilaria Castellano, and the referee for 
comments and corrections. The second author acknowledges funding by the Natural Sciences and 
Engineering Research Council of Canada, NSERC.

\section{Filling Norms, Bounded morphisms} \label{sec:preliminaries}

All rings considered in this article have a multiplicative identity. Let $R$ be a ring and let $\R$ denote the ordered field of real numbers. A norm on $R$ is a function $| \cdot | \colon R \rightarrow \mathbb R$ such that for any $r, r'\in R$ 
\begin{itemize}
	\item $| r | \geq 0$ with equality if and only if $r = 0$,  
	\item $ | r + r' | \leq | r | + | r' | $, and
	\item $| r_1  r_2 | \leq |r_1| | r_2 |$ for $r_1,r_2  \in R$.
\end{itemize}
A \emph{normed ring} is a ring equipped with a norm.

From here on, assume that $R$ is a normed ring. A norm on an $R$-module $M$ is a function $\| \cdot \| \colon M \rightarrow \mathbb R$ such that for any $m,m'\in M$ and $r\in R$
\begin{itemize}
	\item $\| m \| \geq 0$ with equality if and only if $m = 0$, 
	\item $ \| m  + m' \| \leq \| m \| + \| m' \| $, and
	\item $\| r  m \| \leq |r|  \| m \|$.
\end{itemize}
A function $M\to \R$ that satisfies the last two conditions and has only non-negative values is called a pseudo-norm.

The \emph{$\ell_1$-norm} on a free $R$-module $F$ with fixed basis $\Lambda$ is defined as 
\[\left \| \ \displaystyle \sum_{x \in \Lambda} r_x x  \ \right \|_1 = \displaystyle \sum_{x \in \Lambda} | r_x |.\] 
A free $R$-module with fixed basis is called a \emph{based free module}.

\begin{definition}[Filling norm]
	A \emph{filling norm} on a finitely generated $R$-module $M$ is  defined as follows. Let $\rho \colon F \to M$ be a surjective morphism of $R$-modules where $F$ is a finitely generated free $R$-module with fixed basis $\Lambda$ and induced $\ell_1$-norm $\|\cdot\|_1$.  The \emph{filling norm on $M$} induced by $\rho$ and $\Lambda$ is defined as  
	\[ \|m\|_M= \inf \{\|x\|_1 \colon x \in F, \rho(x)=m \}.\]    
\end{definition}

\begin{remark} \label{rem:regularity}
	The following statements can be easily verified.
	\begin{enumerate}
		\item An $\ell_1$-norm $\|\cdot \|_1$ on a finitely generated free $R$-module $F$ is a filling norm.
		
		\item A filling norm $\|\cdot \|$ on a finitely generated $R$-module $M$ is a pseudo-norm, and is regular in the sense that
		\[ \|r m\| = |r|  \|m\| \]
		for  any $m\in M$ and $r\in R$ such that $r$ is a unit and $|r| |r^{-1}|=1$.
	\end{enumerate}
\end{remark}

\begin{definition}[Bounded Morphism]
	A morphism $f\colon M \rightarrow N$ between $R$-modules with norms $\|\cdot\|_M$ and $\|\cdot\|_N$ respectively is called \emph{bounded (with respect to these norms)} if there exists a fixed constant $C > 0$ such that $\|f(a)\|_N \leq C\|a\|_M $ for all $a \in M$.
\end{definition} 

The following lemma appears in~\cite{Ma17} for the case that $R$ is a group ring. 
The proof for an arbitrary ring is analogous, we have included the argument for the convenience of the reader.

\begin{lemma}\label{lem:ProjBounded} \cite[Lemma 4.6]{Ma17}
	Morphisms between finitely generated $R$-modules are bounded with respect to filling norms.
\end{lemma}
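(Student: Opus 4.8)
The plan is to reduce the statement to the case of a morphism out of a based free module, where everything is explicit. Given a morphism $f \colon M \to N$ between finitely generated $R$-modules, equipped with filling norms $\|\cdot\|_M$ and $\|\cdot\|_N$, recall that $\|\cdot\|_M$ comes from a surjection $\rho \colon F \to M$ with $F$ based free on a finite basis $\Lambda$, and $\|\cdot\|_N$ comes from a surjection $\sigma \colon E \to N$ with $E$ based free on a finite basis. First I would consider the composite $f \circ \rho \colon F \to N$. Since $F$ is free with basis $\Lambda$ and $\sigma$ is surjective, for each basis element $x \in \Lambda$ choose a preimage $e_x \in E$ with $\sigma(e_x) = f(\rho(x))$; this defines an $R$-linear map $g \colon F \to E$ with $\sigma \circ g = f \circ \rho$, and I set $C = \max_{x \in \Lambda} \|e_x\|_1$, which is finite because $\Lambda$ is finite.

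Next I would check the bound. Take any $a \in M$ and any $\varepsilon > 0$; choose $x \in F$ with $\rho(x) = a$ and $\|x\|_1 \leq \|a\|_M + \varepsilon$. Writing $x = \sum_{y \in \Lambda} r_y\, y$, one estimates, using the module-norm axioms for $\|\cdot\|_1$ on $E$ (subadditivity and $\|r m\| \le |r|\|m\|$),
\[
\|g(x)\|_1 = \Bigl\| \sum_{y \in \Lambda} r_y\, e_y \Bigr\|_1 \le \sum_{y \in \Lambda} |r_y|\, \|e_y\|_1 \le C \sum_{y \in \Lambda} |r_y| = C\,\|x\|_1.
\]
Then $\sigma(g(x)) = f(\rho(x)) = f(a)$, so by definition of the filling norm on $N$ we get $\|f(a)\|_N \le \|g(x)\|_1 \le C\,\|x\|_1 \le C(\|a\|_M + \varepsilon)$. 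Letting $\varepsilon \to 0$ gives $\|f(a)\|_N \le C\,\|a\|_M$, as required.

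There is really no serious obstacle here; the only point requiring a little care is that a filling norm is defined via an infimum, so one cannot assume the infimum is attained and must carry the $\varepsilon$ through, which the argument above does. A secondary point worth stating explicitly is that all the norms in play satisfy the three module-norm (or pseudo-norm) axioms — in particular $\|\cdot\|_1$ on the based free module $E$ does, by inspection — so the triangle-inequality and scaling estimate in the display are legitimate. The finiteness of the basis $\Lambda$ is what makes $C$ a genuine constant; this is exactly where the hypothesis that $M$ is \emph{finitely generated} enters.
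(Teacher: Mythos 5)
Your proof is correct and follows essentially the same route as the paper: lift $f\circ\rho$ through the surjection $\sigma$ (using freeness of $F$) to get $g\colon F\to E$, bound $g$ on the $\ell_1$-norms by $C=\max_{x\in\Lambda}\|g(x)\|_1$, and push the estimate down to the filling norms. The only cosmetic difference is that you carry an explicit $\varepsilon$ to handle the infimum, whereas the paper bounds $\|\varphi(p)\|_Q$ by $C\|a\|_A$ uniformly over all preimages $a$ of $p$ and then takes the infimum directly; both are valid.
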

\begin{proof}
	First observe that if $\tilde\varphi \colon A \to B$ is a morphism between finitely generated based free $R$-modules, then for $a\in A$,  \[ \| \tilde \varphi(a) \|_B \leq C  \| a \|_A,\] where $\|\cdot\|_A$ and $\|\cdot\|_B$  are the corresponding $\ell_1$-norms, the constant $C$ is defined as $\max\{\| \tilde \varphi (a) \|_B \colon  a\in \Lambda \}$ where $\Lambda$ is the fixed basis of $A$.
	
	Now we prove the statement of the lemma. Let $\varphi \colon P \rightarrow Q$ be a morphism between finitely generated $R$-modules, and let $\| \cdot \|_P$ and $\| \cdot \|_Q$ denote filling norms on $P$ and $Q$ respectively.  Suppose $A$ is a finitely generated based free $R$-module and that  $\rho\colon A\to P$ induces the filling norm $\|\cdot\|_P$, and analogously assume that $\rho'\colon B\to Q$ induces the filling norm $\|\cdot\|_Q$. Then, since $A$ is free, there is a morphism $\tilde \varphi\colon A \to B$ such that $\varphi \circ \rho = \rho' \circ \tilde \varphi$. Let $C$ be the constant  for $\tilde \varphi$ defined above. Let $p\in P$ and note that for any $a\in A$ such that $\rho(a)=p$, 
	\[ \| \varphi (p) \|_Q \leq \| \tilde \varphi (a) \|_B \leq C \| a\|_A .\]
Hence $\| \varphi (p) \|_Q \leq C \|p\|_P$. 
\end{proof}

Two norms $\| \cdot \|$ and $\| \cdot \| '$ on an $R$-module $M$ are said to be \emph{equivalent} if there exists a  constant $C > 0$ such that for all $m \in M$ \[C^{-1} \| m \| \leq \| m \| ' \leq C \| m \|.\] By considering the identity function on a finitely generated module $M$, the previous lemma implies:

\begin{corollary}\label{cor:fillingnorms}
	Any two filling norms on a finitely generated $R$-module $M$ are equivalent.
\end{corollary}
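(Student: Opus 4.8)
The plan is to deduce Corollary~\ref{cor:fillingnorms} directly from Lemma~\ref{lem:ProjBounded} by applying it to the identity morphism. Let $M$ be a finitely generated $R$-module equipped with two filling norms $\|\cdot\|$ and $\|\cdot\|'$. The key observation is that the identity map $\mathrm{id}_M\colon M\to M$ is a morphism of $R$-modules, so Lemma~\ref{lem:ProjBounded} applies to it regardless of which filling norm we place on the source and which on the target.

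First I would regard $\mathrm{id}_M$ as a morphism from $(M,\|\cdot\|)$ to $(M,\|\cdot\|')$. Lemma~\ref{lem:ProjBounded} yields a constant $C_1>0$ with $\|m\|'=\|\mathrm{id}_M(m)\|'\leq C_1\|m\|$ for all $m\in M$. Next I would reverse the roles, viewing $\mathrm{id}_M$ as a morphism from $(M,\|\cdot\|')$ to $(M,\|\cdot\|)$; Lemma~\ref{lem:ProjBounded} now gives a constant $C_2>0$ with $\|m\|\leq C_2\|m\|'$ for all $m\in M$. Setting $C=\max\{C_1,C_2\}$, we obtain $C^{-1}\|m\|\leq \|m\|'\leq C\|m\|$ for all $m\in M$, which is exactly the definition of equivalence of the two norms.

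There is essentially no obstacle here: the corollary is a formal consequence of the lemma once one notices that Lemma~\ref{lem:ProjBounded} says \emph{every} morphism between finitely generated modules is bounded with respect to \emph{any} choice of filling norms on source and target, and in particular the identity is such a morphism in both directions. The only point worth stating carefully is that filling norms are genuine norms (or at least pseudo-norms, by Remark~\ref{rem:regularity}) so that the notion of equivalence and the inequalities make sense; this is already recorded in Remark~\ref{rem:regularity}. Hence the proof is just the two-line symmetric application of the preceding lemma, as indicated in the sentence preceding the corollary statement.
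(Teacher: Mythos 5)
Your proof is correct and matches the paper's own argument: the paper derives the corollary by exactly this two-sided application of Lemma~\ref{lem:ProjBounded} to the identity map, as indicated in the sentence preceding the corollary.
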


\begin{remark}\label{prl} \label{inl}
	Let $M$ be a free $R$-module with basis $\Lambda$, and let $N$ be a free $R$-submodule generated by a finite subset $\Lambda' \subseteq \Lambda$. Consider the induced $\ell_1$-norms $\|\cdot\|_\Lambda$ and $\|\cdot\|_{\Lambda'}$ on $M$ and $N$ respectively.
	\begin{enumerate}
		\item The projection map $\pi \colon M \rightarrow N$ is bounded with respect to the induced $\ell_1$-norms.
		\item The inclusion map $\imath \colon N \rightarrow M$ preserves the induced $\ell_1$-norms, in particular, it is bounded.
	\end{enumerate}
\end{remark}

\begin{lemma}\label{lem:inclusion}
Let $N$ be a finitely generated module with filling norm $\|\cdot\|_N$. Suppose that $N$ is an internal direct  summand of a free module $F$ with an $\ell_1$-norm $\|\cdot\|_1$. Then $\|\cdot\|_N \sim \|\cdot\|_1$ on $N$.
\end{lemma}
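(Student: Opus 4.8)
The plan is to exhibit a specific filling norm on $N$ coming from the ambient free module $F$, show it coincides (up to equivalence) with the restricted $\ell_1$-norm, and then invoke Corollary~\ref{cor:fillingnorms} to conclude that it is equivalent to the given filling norm $\|\cdot\|_N$. Write $F = N \oplus N'$ for the internal direct sum decomposition, and let $\pi\colon F \to N$ be the projection with kernel $N'$. Since $N$ is finitely generated, pick a finitely generated based free module $A$ with a surjection $q\colon A \to N$; composing with the inclusion $\imath\colon N \hookrightarrow F$ gives a map $A \to F$, but more to the point, $\pi\colon F \to N$ is itself a surjection from a free module. The subtlety is that $F$ need not be finitely generated, so I cannot directly call $\pi$ a ``filling norm'' in the sense of the definition; instead I will work with the filling norm on $N$ induced by the finitely generated free module $A$ and the surjection $q$, call it $\|\cdot\|_N'$, and compare both $\|\cdot\|_N'$ and $\|\cdot\|_1|_N$ against each other by hand.

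First I would prove $\|n\|_1 \preceq \|n\|_N'$, i.e. there is $C>0$ with $\|n\|_1 \le C\|n\|_N'$ for all $n \in N$. This is essentially Lemma~\ref{lem:ProjBounded}: the composite $\imath \circ q \colon A \to F$ lands in $N$, and restricting the $\ell_1$-norm of $F$ to the (finitely generated, hence contained in a finite sub-basis) image behaves well; more carefully, since $A$ is finitely generated and based, the images $q(a)$ of the finitely many basis elements $a \in \Lambda_A$ each have finite $\|\cdot\|_1$-norm in $F$, so setting $C = \max_{a \in \Lambda_A}\|q(a)\|_1$ and using the triangle inequality and homogeneity of $\|\cdot\|_1$ gives $\|q(x)\|_1 \le C\|x\|_A$ for every $x \in A$; taking the infimum over $x \in q^{-1}(n)$ yields $\|n\|_1 \le C \|n\|_N'$.

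Next I would prove the reverse inequality $\|n\|_N' \preceq \|n\|_1$. Here is where the direct summand hypothesis is used essentially. Given $n \in N$ and $\varepsilon>0$, choose a representation $n = \sum_{x \in \Lambda_F} r_x x$ in $F$ with $\sum |r_x| \le \|n\|_1 + \varepsilon$; only finitely many $r_x$ are nonzero, so this lives in a finitely generated based free submodule $F_0 \subseteq F$. Apply the projection $\pi\colon F \to N$: since $\pi$ is the identity on $N$, $\pi(n) = n$, and $\pi$ restricted to the finitely generated module $F_0$ is a morphism of finitely generated $R$-modules, hence bounded with respect to filling norms by Lemma~\ref{lem:ProjBounded} — with a constant $C'$ depending only on $F_0$... but $F_0$ varies with $n$, which is a problem. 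To fix this, instead observe directly that $\pi$ sends each basis element $x \in \Lambda_F$ to $\pi(x) \in N$, and one needs a uniform bound on $\|\pi(x)\|_N'$ over all $x \in \Lambda_F$. This uniform bound is exactly what the finite generation of $N$ as a summand provides: $N$ is finitely generated, pick generators $n_1,\dots,n_k$; then $\pi(x)$ for basis elements $x$ — hmm, there could be infinitely many distinct values. The clean route: use that $N$ finitely generated means there is a finite subset $S \subseteq \Lambda_F$ such that $N \subseteq \langle S \rangle \oplus N'$ appropriately...

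Actually, the cleanest fix, and the one I would write up, is: since $N$ is finitely generated and $N \oplus N' = F$, there is a finite subset $\Lambda_0 \subseteq \Lambda_F$ with $N \subseteq F_0 := \langle \Lambda_0\rangle$ (each generator of $N$ has finite support, take the union). Then $F_0$ is a finitely generated based free module, $N$ is a direct summand of $F_0$ (with complement $N' \cap F_0$, using modularity since $N \subseteq F_0$), the $\ell_1$-norm on $F$ restricts to the $\ell_1$-norm on $F_0$ by Remark~\ref{inl}(2), and now $\pi_0 := \pi|_{F_0}\colon F_0 \to N$ is a morphism of finitely generated modules, hence bounded for filling norms by Lemma~\ref{lem:ProjBounded}: there is $C'$ with $\|\pi_0(y)\|_N' \le C'\|y\|_{F_0}$ for all $y \in F_0$. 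Given $n \in N$ and a near-optimal representative $y \in F_0$ with $\|y\|_1 \le \|n\|_1 + \varepsilon$, we get $\|n\|_N' = \|\pi_0(y)\|_N' \le C'\|y\|_{F_0} = C'\|y\|_1 \le C'(\|n\|_1 + \varepsilon)$, and letting $\varepsilon \to 0$ gives $\|n\|_N' \le C'\|n\|_1$. Combining the two inequalities, $\|\cdot\|_N' \sim \|\cdot\|_1|_N$; since $\|\cdot\|_N'$ is a filling norm on the finitely generated module $N$ and so is $\|\cdot\|_N$, Corollary~\ref{cor:fillingnorms} gives $\|\cdot\|_N \sim \|\cdot\|_N'$, whence $\|\cdot\|_N \sim \|\cdot\|_1|_N$. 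The main obstacle is precisely the non-finite-generation of $F$: the whole argument hinges on replacing $F$ by the finitely generated based free submodule $F_0$ containing $N$, after which Lemma~\ref{lem:ProjBounded} and Remark~\ref{inl} do the work.
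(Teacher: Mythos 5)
Your proposal is correct and follows essentially the same route as the paper: both pass to a finitely generated based free submodule of $F$ containing $N$ (the paper's $I$, your $F_0$) on which the $\ell_1$-norm restricts, and then apply Lemma~\ref{lem:ProjBounded} to the restricted projection $F_0 \to N$ and the inclusion $N \hookrightarrow F_0$. The detour through an auxiliary filling norm $\|\cdot\|_N'$ and Corollary~\ref{cor:fillingnorms} is harmless but unnecessary, since Lemma~\ref{lem:ProjBounded} already gives boundedness with respect to the given filling norm $\|\cdot\|_N$ directly.
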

\begin{proof}
 Since $N$ is a finitely generated module contained in $F$, there exist a finitely generated free submodule $I$ of $F$ which is an internal summand, $F= I \oplus J$,  such that $N \subseteq I$, and the restriction of $\|\cdot\|_1$ to $I$ is an $\ell_1$-norm on $I$. Let $\iota \colon N \to I $ denote the inclusion and $\phi \colon F \to N$ denote the projection. By Lemma~\ref{lem:ProjBounded}, both $\phi|_{I} \colon I \to N$  and $\iota \colon N \to I $ are bounded morphisms with respect to the norms $\|\cdot\|_1$ and $\|\cdot\|_N$; let $C_1$ and $C_2$ be the corresponding constants. Then
\[ \|n\|_N = \|\phi(\iota(n))\|_N \leq C_1\|\iota(n)\|_1 \leq C_2C_1\|n\|_N\]
for all $n\in N$, and hence $\|\cdot\|_N \sim \|\cdot\|_1$ on $N$.
\end{proof}

For the rest of this section, let $G$ be a group, let $H$ be a subgroup, and as above let $R$ be a ring with norm $|\cdot|$.

\begin{remark}\label{ghn}
Let $M$ be a free $RG$-module with $\ell_1$-norm $\|\cdot \|_{\Lambda}$ induced by a free basis set $\Lambda$. Then $M$ is a free $RH$-module and there exist a free $RH$-basis $\Lambda_H$ of $M$ such that the induced $\ell_1$-norms  $\|\cdot\|_{\Lambda}$ and  $\|\cdot\|_{\Lambda_H}$ are equal.
	
Indeed, if $S$ is a right transversal of the subgroup $H$ in $G$, then $\Lambda_H=\{ gx \colon x\in \Lambda, g\in S\}$ is a free $RH$-basis  of $M$ as an $H$-module, and the statement about the $\ell_1$-norms holds.
\end{remark}

\begin{lemma}\label{prop:RetractionLemma}
Let $M$ be a finitely generated and projective $RG$-module with filling norm $\|\cdot\|_M$ and let $N$ be a finitely generated $RH$-module with filling norm $\|\cdot\|_N$. Suppose that $N$ is a internal direct summand of $M$ as an $RH$-module. Then $\|\cdot\|_N \sim \|\cdot\|_M$ on $N$.
\end{lemma}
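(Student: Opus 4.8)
The plan is to reduce both norms to a single $\ell_1$-norm living on a common ambient free module, and then apply Lemma~\ref{lem:inclusion} twice --- once over $RG$ and once over $RH$.

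First I would use projectivity of $M$ over $RG$: choose a finitely generated free $RG$-module $F$ with a fixed basis $\Lambda$ and an internal decomposition $F = M \oplus M'$ of $RG$-modules, and let $\|\cdot\|_1$ be the $\ell_1$-norm on $F$ associated to $\Lambda$. Since $M$ is a finitely generated $RG$-module and an internal direct summand of the free module $F$, Lemma~\ref{lem:inclusion} yields $\|\cdot\|_M \sim \|\cdot\|_1$ on $M$, and therefore also on the submodule $N \subseteq M$.

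Next I would restrict scalars to $RH$. The $RG$-decomposition $F = M \oplus M'$ is in particular an $RH$-module decomposition, and by hypothesis $M = N \oplus N'$ as $RH$-modules; composing these gives $F = N \oplus N' \oplus M'$ as $RH$-modules, so $N$ is an internal direct summand of $F$ viewed as an $RH$-module. By Remark~\ref{ghn}, $F$ is a free $RH$-module admitting a basis $\Lambda_H$ whose associated $\ell_1$-norm is exactly $\|\cdot\|_1$. Applying Lemma~\ref{lem:inclusion} again, this time over the ring $RH$, with the finitely generated $RH$-module $N$ sitting as an internal direct summand of the free $RH$-module $F$ with $\ell_1$-norm $\|\cdot\|_1$, we obtain $\|\cdot\|_N \sim \|\cdot\|_1$ on $N$. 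Combining with the previous step, $\|\cdot\|_N \sim \|\cdot\|_1 \sim \|\cdot\|_M$ on $N$, which is the claim; by Corollary~\ref{cor:fillingnorms} the particular choices of filling norms are irrelevant.

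The only point demanding care is the passage between the $RG$- and $RH$-module structures: one must know that switching from the basis $\Lambda$ over $RG$ to a basis over $RH$ (built from a transversal) leaves the $\ell_1$-norm on $F$ unchanged, which is precisely Remark~\ref{ghn}. It is also worth noting that when $[G:H]$ is infinite, $M$ need not be finitely generated over $RH$ and $F$ becomes an $RH$-module of infinite rank, so it is essential that Lemma~\ref{lem:inclusion} permits the ambient free module to have infinite rank --- otherwise the second application would be illegitimate. Beyond this, every step is a direct invocation of an already-established result, so I do not anticipate a genuine obstacle.
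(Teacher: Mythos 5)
Your argument is correct and follows essentially the same route as the paper: use projectivity to realize $M$ as an internal $RG$-direct summand of a finitely generated free based module $F$, pass to $RH$ keeping the same $\ell_1$-norm via Remark~\ref{ghn}, observe that $N$ is then an $RH$-direct summand of $F$, and apply Lemma~\ref{lem:inclusion}. The only cosmetic difference is that for the first comparison $\|\cdot\|_M \sim \|\cdot\|_1$ the paper invokes Lemma~\ref{lem:ProjBounded} directly on the splitting maps rather than re-citing Lemma~\ref{lem:inclusion}, and you correctly flag that the second application needs Lemma~\ref{lem:inclusion} to tolerate an infinite-rank ambient free $RH$-module, which the paper's proof of that lemma is set up to handle.
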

\begin{proof}
Let $F$ be a finitely generated free based module with $\ell_1$-norm $\|\cdot\|_1$, and let $\phi: F \to M$ be a surjective $RG$-morphism inducing filling norm $\|\cdot \|_M$. Since $M$ is projective, there exist an $RG$-morphism $j \colon M \to F$ such that $j \circ \phi = \textup{id}_M$.  Lemma~\ref{lem:ProjBounded} implies that $j$ and $\phi$ are bounded $RG$-morphisms. Therefore $\|\cdot\|_M \sim \|\cdot\|_1$.  Now consider $F$ as an $RH$-module with same $\ell_1$-norm $\|\cdot\|_1$, see Remark~\ref{ghn}. Since $N$ is a direct summand of $M$ as an $RH$-module, it is a direct summand of $F$ as an $RH$-module. Then Lemma~\ref{lem:inclusion} implies $\|\cdot\|_N \sim \|\cdot\|_1$ on $N$.
\end{proof}

\section{Definition of Homological Filling Functions} \label{sec:defintion}

In this section $R$ denotes  a subring of the rational numbers with the absolute value as a norm.  Let $G$ be a group. The group ring $RG$ is a free abelian module over $R$, observe that $RG$ is a normed ring with $\ell_1$-norm induced by the free $R$-basis $G$. From now on, we consider $RG$ as a normed ring with this norm.
\begin{definition}[Integral part]
	Let $P$ be a finitely generated $RG$-module. An integral part of $P$ is a $\ZG$-submodule $A$ which is finitely generated as a  $\ZG$-module, and $A$ generates $P$ as an  $RG$-module.
\end{definition}

From here on, $[0, \infty]$ denotes the set of non-negative real numbers and infinity. The order relation as well as the addition operations are extended in the natural way. 

\begin{definition} \label{def:algFVG} The  \emph{$n^\text{th}$--filling function} of a group $G$ of type $R\mbox{-}FP_{n+1}$, 
 \[FV_{G,R}^{n+1}\colon \N \to [0, \infty],\] 
is defined as follows.  	Let 
\begin{equation} \label{eq:resolution}
	P_{n+1} \overset{\partial_{n+1}} \longrightarrow P_n \overset{\partial_{n}} \longrightarrow \dots \overset{\partial_{2}} \longrightarrow P_1 \overset{\partial_{1}} \longrightarrow P_0 \longrightarrow R \rightarrow 0,
	\end{equation}
	be a partial projective resolution of finitely generated $RG$-modules of the trivial $RG$-module $R$.  Let $K_n$ be an integral part for $\ker(\partial_n)$,  let $\|\cdot \|_{P_n}$ and $\|\cdot \|_{P_{n+1}}$ be filling norms for $P_n$ and $P_{n+1}$ respectively. 
	Then
	\[FV_{G,R}^{n+1}(k) = \sup \left \{ \ \| \gamma \|_{\partial_{n+1}} \colon \gamma \in K_n, \ \| \gamma \|_{P_n} \leq k \ \right \},\]
	where 
	\[ \| \gamma \|_{\partial_{n+1}} = \inf \left\{  \| \mu\|_{P_{n+1}} \colon \mu \in P_{n+1},\  \partial_{n+1} (\mu) =\gamma \right\} .\]
By convention, define the supremum of the empty set as zero.
\end{definition}

See Remark~\ref{rem:finiteness} on finitiness of $FV_{G, R}^{n+1}$.  The rest of this section discusses the proof of the following theorem, which generalizes~\cite[Theorem 3.5]{HM16}. Consider the equivalence relation on the set of non-decreasing  functions $\N\to[0, \infty]$ defined as $f\sim g$ if and only if $f\preceq g$ and $g\preceq f$, where $f\preceq g$ means there is $C>0$ such that for all $k\in\N$, 
 \[ f(k) \leq Cg(Ck+C)+Ck+C.\]

\begin{theorem}\label{AlgebraicDef}  Let $G$ be a group of type $R\mbox{-}FP_{n+1}$. Then the $n^\text{th}$--filling function $FV_{G,R}^{n+1}$ of $G$ is well defined up to the equivalence relation $\sim$. 
\end{theorem}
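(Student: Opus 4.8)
The plan is to show that the definition of $FV_{G,R}^{n+1}$ does not depend, up to $\sim$, on the three choices involved: the partial projective resolution \eqref{eq:resolution}, the integral part $K_n$ of $\ker(\partial_n)$, and the filling norms on $P_n$ and $P_{n+1}$. The last of these is the easiest: since $P_n$ and $P_{n+1}$ are finitely generated, Corollary~\ref{cor:fillingnorms} says any two filling norms on them are equivalent, and changing each norm by a multiplicative constant changes $FV_{G,R}^{n+1}$ by a $\sim$-equivalence (a rescaling of the argument and of the value), so we may fix convenient filling norms throughout. So the real content is independence of the choice of resolution and of the integral part.

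The key technical tool will be a comparison of two partial projective resolutions via a chain map lifting the identity on $R$. First I would treat the integral part: if $K_n$ and $K_n'$ are two integral parts of $\ker(\partial_n)$ for the \emph{same} resolution, I claim the resulting functions agree up to $\sim$. Here one uses that both $K_n$ and $K_n'$ are finitely generated over $\ZG$ and each generates $\ker(\partial_n)$ over $RG$; writing the finitely many $\ZG$-generators of $K_n'$ as $RG$-combinations of generators of $K_n$ (clearing denominators) gives, after scaling by a fixed integer $N$, an inclusion $NK_n' \subseteq K_n$ and symmetrically $NK_n \subseteq K_n'$, and since $N$ is a unit in $RG$ with $|N||N^{-1}|$ controlled, the regularity in Remark~\ref{rem:regularity}(2) together with boundedness of the relevant maps (Lemma~\ref{lem:ProjBounded}) shows the two filling functions are $\sim$-equivalent. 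For two different resolutions $P_\bullet \to R$ and $P_\bullet' \to R$, the standard homological algebra gives chain maps $f_\bullet \colon P_\bullet \to P_\bullet'$ and $g_\bullet \colon P_\bullet' \to P_\bullet$ over $\mathrm{id}_R$ in degrees $\le n+1$, with $g_\bullet f_\bullet$ and $f_\bullet g_\bullet$ chain homotopic to the identity via homotopies that exist in the range of degrees we need. All of $f_n, f_{n+1}, g_n, g_{n+1}$ and the homotopy maps are morphisms between finitely generated $RG$-modules, hence bounded with respect to filling norms by Lemma~\ref{lem:ProjBounded}. One then checks that $f_n$ carries $K_n$ into (a bounded multiple of) an integral part of $\ker(\partial_n')$ and commutes with the boundary maps, so that a filling of $\gamma \in K_n$ by $\mu \in P_{n+1}$ pushes forward to a filling of $f_n(\gamma)$ by $f_{n+1}(\mu)$ with controlled norm, and the chain homotopy lets one recover $\gamma$ itself from $f_n(\gamma)$ up to a bounded error term already lying in the image of $\partial_{n+1}$; combining these inequalities in both directions yields $FV_{G,R}^{n+1}$ computed from $P_\bullet$ is $\sim$-equivalent to the one computed from $P_\bullet'$.

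Concretely, I would organize the argument as follows. \textbf{Step 1:} Reduce to a fixed choice of filling norms using Corollary~\ref{cor:fillingnorms} and note rescaling is absorbed by $\sim$. \textbf{Step 2:} Prove independence of the integral part of $\ker(\partial_n)$ for a fixed resolution, via the clearing-denominators/scaling argument and regularity of filling norms. \textbf{Step 3:} Given two resolutions, construct the comparison chain maps and chain homotopies in degrees $\le n+1$ lifting $\mathrm{id}_R$; invoke Lemma~\ref{lem:ProjBounded} to get uniform bounds on all the maps involved. \textbf{Step 4:} Using the chain map $f_\bullet$, show that a minimal-norm filling of a cycle $\gamma$ on the $P_\bullet$ side transports to a filling of $f_n(\gamma)$ on the $P_\bullet'$ side of comparable norm, and — using the homotopy $g_\bullet f_\bullet \simeq \mathrm{id}$ — that a filling of $f_n(\gamma)$ on the $P_\bullet'$ side can be pulled back to a filling of $\gamma$ of comparable norm; this gives one of the two $\preceq$ inequalities, and the symmetric use of $g_\bullet$ gives the other. \textbf{Step 5:} Chain Steps 2 and 4 together to conclude.

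The main obstacle I expect is Step 4: keeping careful track of how the cycle condition, the filling, and the homotopy interact with the \emph{integral} parts $K_n$ (which are $\ZG$-modules, not $RG$-modules) while all the comparison maps are only $RG$-linear. One must ensure that applying $f_n$ to $\gamma \in K_n$ lands in a fixed integral part of $\ker(\partial_n')$ after scaling by a bounded integer, and that the "error term" $\gamma - g_n f_n(\gamma)$ produced by the homotopy is not merely in $\ker(\partial_n)$ but genuinely a boundary of a small element of $P_{n+1}$ — this is where one uses that the homotopy $D$ satisfies $\mathrm{id} - g_nf_n = \partial_{n+1} D_n + D_{n-1}\partial_n$ and that $\partial_n\gamma = 0$, so $\gamma - g_nf_n(\gamma) = \partial_{n+1}D_n(\gamma)$ with $\|D_n(\gamma)\|_{P_{n+1}}$ bounded by a constant times $\|\gamma\|_{P_n}$ by Lemma~\ref{lem:ProjBounded}. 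Once this bookkeeping is set up, the inequalities are routine applications of the triangle inequality and the boundedness constants.
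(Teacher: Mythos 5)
Your proposal is correct and follows essentially the same route as the paper: first, independence of the integral part via the clearing-denominators/unit-scaling argument (Proposition~\ref{prop:integral} together with regularity, Remark~\ref{rem:regularity}), and second, independence of the resolution via comparison chain maps and homotopies lifting $\mathrm{id}_R$, with all maps bounded by Lemma~\ref{lem:ProjBounded}. The one substantive difference is cosmetic: the paper defers the norm bookkeeping of your Step 4 to the proof in \cite{HM16} (noting only the needed $\epsilon$-adjustment in the choice of the filling $\beta$), whereas you spell out the key identity $\gamma - g_nf_n(\gamma) = \partial_{n+1}h_n(\gamma)$ explicitly; your separate Step 1 about changing filling norms is harmless but subsumed by resolution independence.
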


The proof of Theorem~\ref{AlgebraicDef} relies on the  following  basic structure theorem for subgrings of $\Q$.  

\begin{proposition}\label{prop:R-structure}
	Let $R$ be a subring of $\Q$. Then there is a set $S$ of prime numbers in $\Z_+$ such that $R$  consists of all fractions $\frac{a}{b}$ where $a\in \Z$ and $b$ is product of powers of elements of $S$.
\end{proposition}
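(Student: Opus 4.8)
The plan is to describe $R$ as the localization of $\Z$ at a suitable set of primes, and then read off the stated description of its elements.

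First I would recall the general structure of subrings of $\Q$: any subring $R$ of $\Q$ contains $\Z$ (since $1 \in R$ and $R$ is closed under addition and negation), and is determined by which reciprocals of primes it contains. Concretely, define
\[
S = \{ p \in \Z_+ \colon p \text{ is prime and } \tfrac{1}{p} \in R \}.
\]
The claim is then that $R = \Z[S^{-1}]$, i.e., $R$ consists precisely of the fractions $\frac{a}{b}$ with $a \in \Z$ and $b$ a product of powers of primes in $S$.

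The inclusion $\Z[S^{-1}] \subseteq R$ is immediate: for each $p \in S$ we have $\frac{1}{p} \in R$, hence every product of powers of such reciprocals lies in $R$, and multiplying by an arbitrary integer $a \in \Z \subseteq R$ keeps us in $R$. For the reverse inclusion, take an arbitrary element of $R$ and write it in lowest terms as $\frac{a}{b}$ with $\gcd(a,b)=1$ and $b > 0$. It suffices to show every prime $q$ dividing $b$ lies in $S$, i.e.\ that $\frac{1}{q} \in R$. Since $\gcd(a,b)=1$, for each such $q$ we have $\gcd(a, b/q^{v_q(b)}) $-type coprimality; more simply, writing $b = q^m c$ with $q \nmid c$ and $m \ge 1$, coprimality of $a$ and $b$ gives $\gcd(a q^{m-1}, q^m) = q^{m-1}$ and more to the point $\gcd(aq^{m-1}c,\, q) = 1$, so by Bézout there are integers $u, v$ with $u \cdot a q^{m-1} c + v q = 1$. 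Dividing by $q$ and rearranging,
\[
\frac{1}{q} = u \cdot \frac{a q^{m-1} c}{q} + v = u \cdot \frac{a}{b} \cdot c^2 q^{?}\ldots
\]
— rather than push symbols, the clean way is: $u (aq^{m-1}c) + vq = 1$ means $\frac{1}{q} = u \cdot \frac{aq^{m-1}c}{q} + v = u \cdot (b/q) \cdot \frac{a}{b} \cdot (\text{an integer}) + v$; since $\frac{a}{b}\in R$ and $b/q \in \Z \subseteq R$ and $u, v \in \Z \subseteq R$, we conclude $\frac{1}{q}\in R$, hence $q \in S$. Therefore $b$ is a product of powers of elements of $S$ and $\frac{a}{b} \in \Z[S^{-1}]$.

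The only mildly delicate point — the "main obstacle," though it is quite mild — is the Bézout manipulation showing that membership of a reduced fraction $\frac{a}{b}$ forces $\frac1q \in R$ for each prime $q \mid b$; one must be careful to use the coprimality $\gcd(a,b)=1$ correctly so that the relevant integer combination really produces $\frac1q$ rather than just some element with a smaller denominator. Once this is in place, the proposition follows, and I would also note in passing that this exhibits $R$ as $\Z$ localized at the multiplicative set generated by $S$, which is the form in which it will be used in the sequel (e.g.\ to control denominators when passing between integral parts and $RG$-modules).
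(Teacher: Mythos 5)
The paper states Proposition~\ref{prop:R-structure} without proof, treating it as a standard fact, so there is no in-paper argument to compare against. Your overall strategy is the right one: set $S = \{p\ \text{prime} \colon 1/p \in R\}$, note $\Z[S^{-1}] \subseteq R$ trivially, and for the reverse inclusion show that a reduced fraction $a/b \in R$ forces $1/q \in R$ for every prime $q \mid b$. But the B\'ezout step, which you yourself flag as the delicate point, is wrong as written. With $b = q^m c$ and $q \nmid c$, the claimed coprimality $\gcd(aq^{m-1}c,\, q) = 1$ holds only when $m = 1$: if $m \geq 2$ then $q \mid q^{m-1}$, so $q \mid aq^{m-1}c$ and the gcd is $q$, not $1$. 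Thus the identity $u\,aq^{m-1}c + vq = 1$ does not exist for $m \geq 2$, and the chain of equalities that follows (which also trails off with ``$q^{?}\ldots$'') never closes.

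The repair is simpler and avoids the exponent $m$ entirely. Since $\gcd(a,b)=1$, pick $u,v \in \Z$ with $ua + vb = 1$; dividing by $b$ gives
\[ \frac{1}{b} = u\cdot \frac{a}{b} + v \in R \]
because $a/b \in R$ and $u,v \in \Z \subseteq R$. Then for any prime $q \mid b$ one has $b/q \in \Z \subseteq R$, hence $1/q = (b/q)\cdot (1/b) \in R$ and $q \in S$. This shows $b$ is a product of powers of primes in $S$ and $a/b \in \Z[S^{-1}]$, completing the reverse inclusion.
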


In the following proposition, which is a consequence of Proposition~\ref{prop:R-structure}, we use the convention that for an element $a$ of an $RG$-module $A$, and any $r\in R$,  $ra$ denotes the element $(re)a \in A$ where  $e$ is the identity element of $G$; moreover, the ring of integers $\Z$ is naturally identified with the subring of $RG$ via  $m\mapsto me$. 

\begin{proposition} \label{prop:integral}
	Let $P$ and $Q$ be finitely generated $RG$-modules. Then
	\begin{enumerate}
		\item If $A$ is an integral part, then for all units $r\in R$, $rA= \{ra \colon a \in A\}$ is an integral part.
		\item If $f\colon P \to Q$ is a morphism of $RG$-modules, and $A$ and $B$ are integral parts of $P$ and $Q$ respectively, then there exists a positive integer $m$ which is a unit in $RG$ and such that $ f(mA) \subseteq B$.
	\end{enumerate}
\end{proposition}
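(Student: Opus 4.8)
The plan is to treat the two items separately; the structure theorem Proposition~\ref{prop:R-structure} does the essential work in item (2), while item (1) is essentially formal.

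For item (1), recall that under the identification $m\mapsto me$ the element $r\in R$ becomes a central element of $RG$, so multiplication by $r$ commutes with the $G$-action and $rA$ is again a $\ZG$-submodule of $P$. If $a_1,\dots,a_k$ generate $A$ as a $\ZG$-module, then $ra_1,\dots,ra_k$ generate $rA$ as a $\ZG$-module, so $rA$ is finitely generated over $\ZG$. Finally, since $r$ is a unit in $R$, the element $re$ is a unit in $RG$ with inverse $r^{-1}e$; hence the $RG$-submodule of $P$ generated by $rA$ is closed under multiplication by $r^{-1}e$ and therefore contains $r^{-1}(rA)=A$. As $A$ generates $P$ over $RG$, this $RG$-submodule equals $P$, so $rA$ is an integral part.

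For item (2), fix $\ZG$-module generators $a_1,\dots,a_k$ of $A$ and $b_1,\dots,b_\ell$ of $B$. Since $B$ generates $Q$ over $RG$, I can write $f(a_i)=\sum_{j=1}^{\ell}\lambda_{ij}b_j$ with each $\lambda_{ij}\in RG$. Expanding each $\lambda_{ij}$ over the $R$-basis $G$, only finitely many coefficients are nonzero, and each of them lies in $R$, hence by Proposition~\ref{prop:R-structure} has the form $a/b$ with $b$ a product of powers of primes from the set $S$ attached to $R$. Let $m$ be the product of all the denominators $b$ occurring among these finitely many coefficients. Then $m$ is a positive integer with $m\lambda_{ij}\in\ZG$ for all $i,j$, and $m$ is itself a product of powers of primes from $S$, so $1/m\in R$ by Proposition~\ref{prop:R-structure}; thus $m$ (that is, $me$) is a unit in $RG$. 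Now $mA$ is generated over $\ZG$ by $ma_1,\dots,ma_k$, and since $f$ is $RG$-linear, $f(ma_i)=m\,f(a_i)=\sum_{j}(m\lambda_{ij})\,b_j$, which lies in $B$ because $m\lambda_{ij}\in\ZG$ and $B$ is a $\ZG$-submodule. Since $B$ is closed under the $\ZG$-action, it follows that $f(mA)\subseteq B$.

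I do not expect a serious obstacle; the only delicate point is that the single integer $m$ must simultaneously clear all denominators appearing in the $\lambda_{ij}$ and still be invertible in $R$. This is exactly what Proposition~\ref{prop:R-structure} provides: the denominators arising from $f$, $A$, and $B$ are products of powers of primes in $S$, and the set of such products is closed under multiplication, so the reciprocal of $m$ again lies in $R$.
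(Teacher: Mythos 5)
Your proof is correct and uses the same essential mechanism as the paper: expand over the $R$-basis $G$, invoke Proposition~\ref{prop:R-structure} to identify a single integer $m$ that clears all denominators and remains a unit in $R$, then push $mA$ through $f$. The paper packages item~(2) slightly differently — it lifts $f$ to an $RG$-morphism $\eta\colon F(S)\to F(T)$ between free modules on the generating sets and clears denominators in the matrix of $\eta$ — but that extra layer is not needed here; your version argues directly from the relations $f(a_i)=\sum_j \lambda_{ij}b_j$ and is leaner. The paper's framing does pay off later in Lemma~\ref{lem: integral part H}, where the same diagram is reused with $F(T)$ infinitely generated over $RH$; your approach would adapt there just as easily, since again only finitely many coefficients $\lambda_{ij}$ are involved. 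Your item~(1), which the paper dismisses as immediate, correctly checks all three requirements ($\ZG$-submodule via centrality of $r$, finite generation, and generating $P$ over $RG$ using that $re$ is a unit).
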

\begin{proof}
	The first statement is immediate from the definition.  	For the second statement.
	Let $S$ be a finite generating set of $A$ as a $\ZG$-module, and observe that $S$ generates $P$ as an $RG$-module. Let $F(S)$ be the free $RG$-module on $S$, let $\phi\colon F(S) \to P$, and let $C$ be the $\ZG$-submodule of $F(S)$ generated by $S$, and observe that $\phi(C)=A$. Analogolusly, let $T$ be a finite generating set of $B$ as a $\ZG$-module, let $\psi\colon F(T) \to Q$, and let $C'$ be the $\ZG$-submodule of $F(T)$ generated by $T$, and note that $\psi(C')=B$.
	
	Since $F(S)$ is free, there is an $RG$-morphism $\eta \colon F(S) \to F(T)$ such that the following diagram commutes.
	\begin{equation}
	\begin{tikzpicture}[>=angle 90]
	\matrix(a)[matrix of math nodes,
	row sep=2.5em, column sep=2.5em,
	text height=1.5ex, text depth=0.25ex]
	{F(S)&F(T)\\
		P&Q\\};
	\path[->](a-1-1) edge node[above]{$\eta$} (a-1-2);
	\path[ ->>](a-1-1) edge node[left]{$\phi$}(a-2-1);
	\path[->>](a-1-2) edge node[right]{$\psi$}(a-2-2);
	\path[ ->](a-2-1) edge node[below]{$f$} (a-2-2);
	\end{tikzpicture}
	\end{equation}
	
	Note that $\eta \colon F(S) \to F(T)$ is described by a finite matrix with entries in $RG$. By Proposition~\ref{prop:R-structure}, there is an integer $m$, which is a unit in $R$, such that the morphism $m.\eta \colon F(S) \to F(T)$ given by $\alpha \mapsto m\alpha$ has the property that $\eta(C) \subseteq C'$. By commutativity of the diagram $f \circ (m\phi) = \psi \circ (m\eta)$ and therefore $f(mA)\subseteq B$.
\end{proof}

The following lemma is a strengthening of  Proposition~\ref{prop:integral} that will be used in the last section.
\begin{lemma} \label{lem: integral part H}
	Let $H \leqslant G$ be a subgroup and let $P$ and $Q$ be finitely generated $RH$ and $RG$ modules respectively. If $f\colon P \to Q$ is an $RH$-morphism, and $A$ and $B$ are integral parts of $P$ and $Q$ respectively, then there exists a positive integer $m$, which is a unit in $R$, such that $ f(mA) \subseteq B$.
\end{lemma}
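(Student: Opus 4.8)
The statement is essentially Proposition~\ref{prop:integral}(2) with the codomain module being a $G$-module while the map is only required to be $H$-equivariant; the point is that the argument of Proposition~\ref{prop:integral} did not really use that $f$ was $G$-equivariant, only that it was an $R$-linear map compatible enough with the integral structures that the relevant matrix has entries in $RH\subseteq RG$. So the plan is to mimic the proof of Proposition~\ref{prop:integral}(2), working over the ring $RH$ throughout and only using the $G$-module structure on $Q$ at the very end.

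First I would fix a finite generating set $S$ of $A$ as a $\Z H$-module; since $A$ generates $P$ over $RH$ by definition of an integral part, $S$ also generates $P$ as an $RH$-module. Let $F(S)$ be the free $RH$-module on $S$ with projection $\phi\colon F(S)\to P$, and let $C\subseteq F(S)$ be the $\Z H$-submodule generated by $S$, so that $\phi(C)=A$. Likewise fix a finite $\Z G$-generating set $T$ of $B$, let $F(T)$ be the free $RH$-module on $T$ (regarding $Q$ as an $RH$-module by restriction, which is legitimate since $B$ still generates $Q$ as an $RH$-module—indeed as a $\Z H$-module after enlarging $T$ to $\{gt : g\in\text{(suitable finite set)}, t\in T\}$ if necessary, but more simply $T$ already $\Z G$-generates $B$ hence $RH$-generates $Q$). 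Let $\psi\colon F(T)\to Q$ and $C'\subseteq F(T)$ the $\Z H$-submodule generated by $T$; here we only know $\psi(C')\subseteq B$, but that containment is all that is needed. Since $F(S)$ is free over $RH$ and $f\colon P\to Q$ is an $RH$-morphism, there is an $RH$-morphism $\eta\colon F(S)\to F(T)$ with $\psi\circ\eta=f\circ\phi$.

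Now $\eta$ is described by a finite matrix with entries in $RH$. Applying Proposition~\ref{prop:R-structure} to the finitely many denominators appearing in these entries, there is a positive integer $m$ which is a unit in $R$ such that $m\eta$ has all entries in $\Z H$, hence $(m\eta)(C)\subseteq C'$. Then
\[
f(mA) = f\bigl(m\phi(C)\bigr) = \psi\bigl((m\eta)(C)\bigr) \subseteq \psi(C') \subseteq B,
\]
which is the claim. The one subtlety, and the only place the generality over Proposition~\ref{prop:integral} matters, is making sure that $C'$ is a genuine $\Z H$-submodule of the \emph{$RH$}-free module $F(T)$ and that $\psi(C')\subseteq B$: this is automatic because $T\subseteq B$ and $B$ is a $\Z G$-module hence in particular a $\Z H$-module, so the $\Z H$-span of $T$ lands in $B$. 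I do not expect a real obstacle here—the content is bookkeeping—so the ``hardest'' step is simply organizing the free $RH$-module picture correctly so that the matrix of $\eta$ has entries in $RH$ (not merely $RG$), which forces one to build $F(T)$ over $RH$ and to choose $T$ so that it $RH$-generates $Q$; after that Proposition~\ref{prop:R-structure} clears denominators exactly as before.
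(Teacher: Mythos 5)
Your plan has a genuine gap at the lifting step. You build $F(T)$ as the free $RH$-module on a \emph{finite} set $T$ which $\Z G$-generates $B$, and then assert parenthetically that ``$T$ already $\Z G$-generates $B$ hence $RH$-generates $Q$.'' This inference is false. The fact that $\Z G\cdot T = B$ and $RG\cdot B = Q$ does not imply $RH\cdot T = Q$: for instance, if $H$ is trivial, $G=\Z$, $R=\Q$, $Q=RG=\Q[t,t^{-1}]$ and $T=\{1\}$, then $RH\cdot T = \Q \cdot 1 \neq Q$. Consequently $\psi\colon F(T)\to Q$ need not be surjective, and more to the point its image $RH\cdot T$ need not contain $f(P)$, so the lift $\eta\colon F(S)\to F(T)$ with $\psi\circ\eta = f\circ\phi$ may simply not exist. (You do gesture at a fix — enlarging $T$ by a finite set of translates — but no \emph{finite} enlargement can $\Z H$-generate $B$ in general, and you discard the enlargement in favour of the incorrect ``more simply'' claim.)

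The paper's proof takes the route you need here: keep $F(T)$ as in the proof of Proposition~\ref{prop:integral} (the free $RG$-module on the finite set $T$, so $\psi$ is genuinely surjective and the lift $\eta$ exists), but now regard $F(T)$ as a free $RH$-module via Remark~\ref{ghn}, where its $RH$-basis $\{gt : g\in \text{transversal}, t\in T\}$ is infinite. The matrix of $\eta$ in the bases $S$ and $\{gt\}$ is then infinite, but since $S$ is finite and each $\eta(s)$ is a finite $RH$-combination, only finitely many entries are nonzero; Proposition~\ref{prop:R-structure} then clears denominators as before. Alternatively, your enlargement idea can be salvaged, but one must enlarge $T$ to a finite $T'$ large enough that $RH\cdot T' \supseteq f(P)$ (possible because $f(P)$ is finitely $RH$-generated and each generator lies in $RG\cdot T$ involving only finitely many group elements), and note that $\psi(C'')\subseteq B$ for $C''$ the $\Z H$-span of $T'$ because $T'\subseteq B$. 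Either of these closes the gap, but as written the proposal does not.
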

\begin{proof}
	Considering $Q$ as an $RH$-module, the proof proceeds similar to \ref{prop:integral} except that here $F(T)$ is infinitely generated and so the matrix is infinite. But observe that only finitely many entries are non-zero, so the same argument holds.
\end{proof}

\begin{proof}[Proof of Theorem~\ref{AlgebraicDef}]
	The proof is divided into two steps. The second step is a small variation of the argument in~\cite[Proof of Theorem 3.5]{HM16} for which we only remark the changes. 
	
	\begin{step}
		$FV_G^{n+1}$ (up to equivalence) does not depend on the choice of the integral part $K_n$.
	\end{step} Let $A$ and $B$ be two integral parts of $K_n$, and let $FV_A$ and $FV_B$ denote the corresponding $n^{th}$-filling functions of $G$. By Proposition \ref{prop:integral}, there exists a positive integer $m$, that is a unit in $RG$, such that $m.A \subseteq B$. Let $\gamma \in A$ such that  $\|\gamma\|_{P_n} \leq k$. 
	Then, since $m$ is a unit and $|m| |m^{-1}|=1$, $\|\gamma\|_{\delta_{n+1}} = \frac{1}{m}\|m\gamma\|_{\delta_{n+1}}$ and $\|m\gamma\|_{P_n} = m\|\gamma\|_{P_n} \leq mk $;  see  Remark~\ref{rem:regularity}. Observe that $m\gamma \in B$ therefore $\|\gamma\|_{\delta_{n+1}} \leq \frac{1}{m}FV_B(mk)$. Since $\gamma$ was arbitrary, $FV_A(k) \leq \frac{1}{m}FV_B(mk)$. By symmetry we get the other inequality. 	                                                                                                                    
	\begin{step}$FV_G^{n+1}$ (up to equivalence) does not depend on the choice of the resolution~\eqref{eq:resolution}.
	\end{step}
	Let $(P_\ast, \partial_\ast)$ and $(Q_\ast, \delta_\ast)$ be a pair of resolutions as in~\eqref{eq:resolution}. Since any two projective resolutions of $R$ are chain homotopy equivalent, there exist chain maps $f_i \colon P_i \rightarrow Q_i, \ g_i \colon Q_i \rightarrow P_i$, and a map $h_i \colon P_i \rightarrow P_{i+1}$ such that \[ \partial_{i+1} \circ h_i + h_{i-1} \circ \partial_i = g_i \circ f_i - Id.\]
	By Proposition \ref{prop:integral}, there exist integral parts $K_n$ and $K'_n$ of $ker(\partial_n)$ and $ker(\delta_n)$ respectively, such that $f_n(K_n) \subseteq K'_n$. This ensures that the same argument in~\cite[Proof of Theorem 3.5]{HM16} works except for a minor change in the choice of the element named $\beta$ in the cited proof. Replace it by the following: “For $\epsilon >0$, choose $\beta \in Q_{n+1}$ such that $\delta_{n+1}(\beta) = f_n (\alpha )$ and  $\| \beta \|_{Q_{n+1}} < \|f_n(\alpha) \|_{\delta_{n+1}} + \epsilon ”$. The rest of the proof proceeds in the same manner.
\end{proof}

\begin{remark}[Topological interpretation of filling functions]
Assume $G$ admits a $K(G, 1)$ model $X$ with finite $(n+1)$-skeleton. The augmented cellular chain complex $C_*(\tilde X, R)$ of the universal cover $\widetilde X$ of $X$ is a projective resolution of the trivial $RG$-module $R$ by free modules. By considering the $\ell_1$-norm of 
	$C_i(\tilde X,R)$ induced by the basis consisting of $i$-dimensional cells of $\widetilde X$, the definition of $\FV_{G,R}^{n+1}$ using this resolution provides the interpretation $\FV_{G,R}^{n+1}$ as the minimal volume required to fill integral $n$-cycles with  $(n+1)$-cellular chains with coefficients in $R$. Observe that 
\begin{equation}\label{eq:finiteness}  FV^{n+1}_{G,R} \leq FV^{n+1}_{G,\Z} \end{equation}
\end{remark}

\begin{remark}\label{rem:finiteness}[Finiteness of $FV^{n+1}_{G, R}$]
Assume that $G$ admits a $K(G, 1)$ model $X$ with finite $(n+1)$-skeleton. By the main result of~\cite{FlMa18}, for every positive integer $k$, $FV_{G,\Z}^{n+1}(k) <\infty$. Then equation~\eqref{eq:finiteness} implies that $FV_{G,R}^{n+1}(k) <\infty$ for any $k\geq 0$. 
\end{remark}

A positive answer to the following question in the case that $R=\Z$ is given in~\cite{FlMa18}. 
\begin{question}
Suppose that $G$ is of type $R\mbox{-}FP_{n+1}$. Is $FV_{G,R}^{n+1}(k)<\infty$  for all $k\in \N$?
\end{question}

\begin{remark}[On the use integral part in Definition~\ref{def:algFVG}]
	We note that the filling function $\FV_{G,\Z}^{n+1}$ was defined in~\cite{HM16} by considering $ker(\partial_n)$ in lieu of its integral part. This approach does not generalize in order to define  $\FV_{G,\Q}^{n+1}$   as the following example illustrates. Consider the group presentation $G = \langle x,y| [x,y]\rangle$ and let  $X$ be the universal cover of the presentation complex, i.e., the Cayley complex.  In $X$  consider the following cycles with rational coefficients $a_n= \frac{1}{4n}[x^ny^n]$ for $n \in \N$. Then $\|a_n\|_1= 1$ and by regularity $\|a_n\|_{\partial}=\frac{1}{4}n$, in particular 
\[ \max\{\|\gamma\|_{\partial_2}  \colon \gamma \in Z_n(\widetilde X, \Q), \| \gamma \|_1 \leq 1\} = \infty,\]
and hence the approach in~\cite{HM16} does not yield a well defined $FV^2_{G,\Q}(k)$. In contrast, using Definition~\ref{def:algFVG}, $FV^2_{G,\Q} \preceq FV^2_{G, \Z} \sim k^2$.
\end{remark}

\section{Proof of Theorem~\ref{thm:main2}} \label{sec:MappingCylinder}

The proof of Theorem~\ref{thm:main2} is discussed after the proof of the following proposition. 

\setcounter{step}{0}

\begin{proposition}  \label{prop:MappingCylinder}
	Suppose that $cd_R(G)=n+1$, $G$ is of type $R\mbox{-}FP_{n+1}$, and $H$ is a subgroup of $G$ of type $R\mbox{-}FP_{n+1}$.  Then for any  partial projective resolution of the trivial $RH$-module $R$ of finite type
	\begin{equation} \label{eq:02}
	Q_{n+1} \rightarrow Q_n \rightarrow \cdots \rightarrow Q_0 \rightarrow R \rightarrow 0,
	\end{equation}
	there is a projective resolution of the trivial $RG$-module $R$ of finite type
	\begin{equation} \label{eq:01}   
	0 \rightarrow M_{n+1}\rightarrow M_n \rightarrow \cdots \rightarrow M_0 \rightarrow R\rightarrow 0,
	\end{equation}
	an injective morphisms $\imath_i\colon Q_i\to M_i$ of $RH$-modules, $0\leq i\leq n$, such that 
	\begin{equation}\label{eq:03}
	\begin{tikzcd}[cells={nodes={minimum height=2em}}]
	Q_n \arrow[r]\arrow[d,"\imath_n"] & \cdots \arrow[r] & Q_1 \arrow[d, "\imath_1"] \arrow[r] &  Q_0 \arrow[r] \arrow[d, "\imath_0"] & R \arrow[d, "Id"] \\
	M_n \arrow[r] & \cdots \arrow[r] & M_1 \arrow[r] & M_0 \arrow[r] & R .
	\end{tikzcd}
	\end{equation} 
	is a commutative diagram of $RH$-modules, and the short exact sequences of $RH$-modules
	\begin{equation}\label{eq:07}
	\begin{tikzcd}[cells={nodes={minimum height=2em}}]
	0 \arrow[r] & Q_i \arrow[r, "\imath_i"] & M_i \arrow[r] & S_i \arrow[r] & 0
	\end{tikzcd}
	\end{equation}
	split. In particular each $S_i$ is a projective $RH$-module.
\end{proposition}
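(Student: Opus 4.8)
The plan is to build the resolution $(M_\ast)$ of the trivial $RG$-module $R$ by a homological mapping cylinder construction, fusing a chosen $RG$-resolution of $R$ with the given $RH$-resolution $(Q_\ast)$ of $R$ in low degrees, and then using the hypothesis $\cd_R(G)=n+1$ together with $R\mbox{-}FP_{n+1}$ to truncate and project the result into a length-$(n+1)$ resolution by finitely generated projectives. First I would fix an $RG$-resolution $F_\ast \to R$ by finitely generated free (hence projective) $RG$-modules, which exists in degrees $0,\dots,n+1$ by $R\mbox{-}FP_{n+1}$; restricted to $H$ this is also a resolution of $R$ by finitely generated (though no longer finitely generated over $RH$ unless $[G:H]<\infty$ — here I should instead keep $F_\ast$ as an $RG$-resolution and only later restrict scalars where needed) projective $RH$-modules. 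Since any two projective resolutions of $R$ over $RH$ are chain homotopy equivalent, there is an $RH$-chain map $f_\ast\colon Q_\ast \to F_\ast$ (over $\mathrm{Id}_R$) in the relevant range of degrees.

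Next I would form, degree by degree in $0\le i\le n$, the algebraic mapping cylinder $M_i := Q_i \oplus F_i$ (or more precisely, in degrees $\le n$ take $M_i = Q_i \oplus F_i \oplus F_{i-1}$-type summands as in the standard mapping-cylinder differential), with the inclusion $\imath_i\colon Q_i \to M_i$ the obvious split injection onto the first summand; the quotient $S_i = M_i/\imath_i(Q_i)$ is then, by construction, a direct summand of $M_i$ and is projective over $RH$ since it is built from the $F_j$'s, which are projective over $RG$ and hence over $RH$. The differentials of $M_\ast$ are assembled from $\partial^Q$, $\partial^F$, and the chain map $f_\ast$ so that $(M_\ast,\partial^M)$ is a complex, the augmentation $M_0 \to R$ is induced by that of $F_0$ (composed appropriately with $f_0$), the diagram \eqref{eq:03} commutes by design, and $M_\ast$ is exact — here the acyclicity of the mapping cylinder is exactly the standard fact that a mapping cylinder of a chain map between acyclic complexes is acyclic. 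This gives me a resolution of $R$ of finite type over $RG$ in degrees $0,\dots,n+1$, but possibly of length longer than $n+1$ (the cylinder construction glues in a tail from $F_\ast$ and $Q_\ast$).

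The main obstacle, and the step I expect to require the most care, is truncating to length exactly $n+1$ while keeping all modules \emph{projective and finitely generated} over $RG$. Here is where $\cd_R(G)=n+1$ enters: the $(n+1)$-st kernel $\ker(\partial^M_n)$ in any resolution of $R$ by projectives of finite type is projective, by a standard dimension-shifting / Schanuel argument, because $\cd_R(G)\le n+1$; and it is finitely generated because $G$ is of type $R\mbox{-}FP_{n+1}$ (Bieri's criterion: type $FP_{n+1}$ plus $\cd\le n+1$ forces the $(n+1)$-st syzygy in a finite-type partial resolution to be finitely generated projective). So I set $M_{n+1} := \ker(\partial^M_n)$, obtaining \eqref{eq:01}. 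It remains to check that the inclusions $\imath_i$ for $0\le i\le n$ still make sense — they were defined on $M_i$ for $i\le n$, which are untouched by the truncation — and that the sequences \eqref{eq:07} split: this is automatic from the mapping-cylinder definition, where $\imath_i$ is a coordinate inclusion $Q_i\hookrightarrow Q_i\oplus(\text{rest})$, and the projection onto the complementary summand is the required $RH$-splitting, whence $S_i$ is an $RH$-direct summand of the projective $RH$-module $M_i$ and therefore projective. The one point needing attention is that $M_i$ for $i\le n$ must be finitely generated over $RG$: this follows since $Q_i$ is finitely generated over $RH$ (so its induced-up module, or rather the relevant free $RG$-module appearing in the cylinder, is finitely generated over $RG$) and $F_i$, $F_{i-1}$ are finitely generated over $RG$ — I would phrase the cylinder so that each $M_i$ is visibly a finitely generated free or projective $RG$-module, replacing $Q_i$ by a finitely generated free $RG$-module surjecting onto it if necessary and absorbing the difference, following the mapping-cylinder bookkeeping in the style of~\cite{HM16}.
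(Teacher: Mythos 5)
Your overall strategy agrees with the paper's: build $M_*$ as an algebraic mapping cylinder of a chain map from (something coming from) $Q_*$ to an $RG$-resolution of $R$, use $\cd_R(G)=n+1$ and type $R\mbox{-}FP_{n+1}$ to keep everything finitely generated, projective, and of length $n+1$, and read off the split injections $\imath_i$ as coordinate inclusions. But there is a genuine gap at exactly the point you wave at in your last sentence, and your proposed remedy does not work.

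The cylinder $M_i = Q_i \oplus F_i \oplus F_{i-1}$ you describe is only a complex of $RH$-modules, because $Q_i$ is an $RH$-module and not an $RG$-module. The proposition demands a projective resolution of the trivial \emph{$RG$-module} $R$ of finite type, so this $M_*$ is not even a candidate. You notice the tension, but your fix, ``replacing $Q_i$ by a finitely generated free $RG$-module surjecting onto it and absorbing the difference,'' goes in the wrong direction: there is no canonical $RG$-surjection onto the $RH$-module $Q_i$, and in any case you need $Q_i$ to sit \emph{inside} $M_i$ as an $RH$-direct summand in order to define the split injection $\imath_i$, not a module surjecting onto $Q_i$. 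The correct move, which is what the paper does, is to apply extension of scalars $RG\otimes_{RH}(-)$ to the $RH$-complex $Q_*$ before forming the cylinder. Since $RG$ is free as a right $RH$-module, this functor is exact and preserves finite generation and projectivity, so $RG\otimes_{RH}Q_*$ is an $RG$-complex of finitely generated projective $RG$-modules, and one takes the cylinder of the $RG$-chain map $f_*\colon RG\otimes_{RH}Q_* \to P_*$ (with $P_*$ a length-$(n+1)$ finite-type $RG$-resolution). Then $\imath_i$ is $q\mapsto(0, e\otimes q, 0)$, the composition of $\tau_i\colon Q_i\to RG\otimes_{RH}Q_i$, $q\mapsto e\otimes q$, with the coordinate inclusion $\jmath_i$; this is an $RH$-split injection precisely because $RG\otimes_{RH}Q_i$ decomposes as an $RH$-module into a direct sum of copies of $Q_i$ indexed by a transversal of $H$ in $G$. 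This resolves both the $RG$-module issue and the finite-generation issue over $RG$ simultaneously, which your ``replace $Q_i$ by a free module'' sketch does not.

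Two smaller points. First, the paper does not truncate at the end: it chooses $P_*$ of length exactly $n+1$ from the start (using $\cd_R(G)=n+1$ and $R\mbox{-}FP_{n+1}$ via Brown, pg.~199, Prop.~6.1), and truncates the \emph{source} of the cylinder at degree $n$ by sending $f_{n+1}$ to $0\to P_{n+1}$; the cylinder then has length $n+1$ automatically and each $M_i$ is visibly a finite direct sum of finitely generated projectives. Your plan of replacing the top term by $\ker(\partial_n^M)$ and invoking Schanuel/Bieri would also work in principle, but it is an extra argument the paper avoids. Second, once you pass to the $RG$-cylinder, the chain map $f_*$ must now lie over an $RG$-morphism $\phi\colon RG\otimes_{RH}R\to R$ rather than over $\mathrm{Id}_R$, and the commutativity of the right-most square of~\eqref{eq:03} (equivalently, that $H(\imath_0)$ is an isomorphism) is no longer ``by design'': it depends on choosing $\phi$ to be the augmentation-induced map with $\phi\circ\tau_{-1}=\mathrm{Id}_R$. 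The paper devotes a step to this verification; your sketch omits it, which is harmless in the naive $RH$-cylinder but becomes a real obligation after the extension-of-scalars correction.
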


\begin{remark}\label{rem:FH}
	Proposition~\ref{prop:MappingCylinder} replaces topological arguments in~\cite{HM16}, based on work of Gersten~\cite{Ge96}, that use topological mapping cylinders. The arguments there are relatively less involved. In the generality that we are working, it is not possible to rely on this type of topological constructions. We would need free cocompact actions on  $(n+1)$-acyclic complexes for $G$ and $H$, they are not known to exist under our hypothesis.  Specifically, recall that a group $G$ is of type $FH_n$, if $G$ admits a cocompact action on an $n$-acyclic space $X$; in this case the action of $G$ on the cellular chain complex of $X$ induces a resolution of $\Z$ as a $\Z G$-module. Hence $FH_n$ implies $FP_n$. It is an open question whether groups of type $FP_n$ are of type $FH_n$ for $n\geq 3$, see~\cite{BeBr97}.
\end{remark}

The proof of the Proposition~\ref{prop:MappingCylinder} is an application of the \emph{mapping cylinder} of chain complexes from basic homological algebra that we recall below. 

Let $B_*=\{B_i,d_i\} $ and $C_*= \{C_i,d_i'\}$ be two chain complexes of modules over some fixed ring, and let $f\colon B_* \rightarrow C_*$ be a chain map. Then the mapping cylinder $M_*=\{M_i,d_i''\}$ is a chain complex where $M_i= C_i \oplus B_i \oplus B_{i-1} $ with 
\[ d_i''= \begin{pmatrix}
d'_i & 0 & -f_i \\
0 & d_i& Id_B\\
0 &0 & -d_i
\end{pmatrix}\]
Observe that, if both $B_*$ and $C_*$ consists only of finitely generated projective modules then the the same holds for $M_*$. The natural inclusion $C_* \hookrightarrow M_*$ given by $c\mapsto (c,0,0)$ is a chain homotopy equivalence. The chain homotopy inverse map $\kappa_*\colon M_* \to C_*$ is given by $(c,b,b') \mapsto c + f(b)$.  Let $\jmath_*\colon B_* \to M_*$ be the inclusion given by $b \mapsto (0,b,0)$.  It is an observation that the triangle
\begin{equation}\label{eq:Ilaria} 
\begin{tikzcd}[cells={nodes={minimum height=2em}}]
B_*\arrow[rr, "f_*"] \arrow[rd, "\jmath_*"] & &C_*\\  &M_*\arrow[ru, "\kappa_*"]& 
\end{tikzcd}
\end{equation}
commutes.  For background on mapping cylinders see~\cite{We94}.

\begin{proof}[Proof of Proposition~\ref{prop:MappingCylinder}] We split the proof into four steps.
	\begin{step}Definition of the resolution~\eqref{eq:01} as a mapping cylinder\end{step}
	
	Since $cd_R(G)=n+1$ and $G$ is of type $R\mbox{-}FP_{n+1}$, there is a projective resolution of $RG$-modules of finite type
	\begin{equation} \label{eq1}
	0\rightarrow P_{n+1} \rightarrow P_n \rightarrow .....\rightarrow
	P_0  \rightarrow R \rightarrow 0,
	\end{equation}
	see~\cite[pg.199, Prop. 6.1]{Br94}. 
	
	The group ring $RG$ is a free right $RH$-module. It follows that the extension of scalars functor from left $RH$-modules to left $RG$-modules $A \mapsto RG\otimes_{RH} A$ is exact. This functor also preserves finite generation  and projectiveness. From the given resolution~\eqref{eq:02}, we obtain  a partial projective resolution of the $RG$-module  $RG\otimes_{RH} R$ of finite type
	\begin{equation}  \label{eq3}
	RG\otimes_{RH} Q_n \rightarrow \cdots \rightarrow RG\otimes_{RH} Q_0 \rightarrow RG \otimes_{RH} R \rightarrow 0.
	\end{equation}
	
	Consider the $RG$-morphism $\phi \colon RG\otimes_{RH} R \to R$ induced by \begin{equation}\label{eq12} \phi\colon RG \times R \to R, \qquad  (s,r) \mapsto \epsilon(s)r, \end{equation} 
	where $\epsilon\colon RG \to R$ is the augmentation map,  $ \epsilon (\sum r_ig_i ) =  \sum r_i$.  Since each of the $RG$-modules $RG\otimes_{RH} Q_i$ is projective, there are $RG$-morphisms $f_i\colon RG \otimes_{RH} Q_i  \to  P_i$ such that 
	\begin{equation}\label{eq15}
	\begin{tikzcd}[cells={nodes={minimum height=2em}}]
	RG \otimes_{RH} Q_n \arrow[r]\arrow[d,"f_n"] & \cdots \arrow[r] & RG \otimes_{RH} Q_0 \arrow[r] \arrow[d, "f_0"] &   RG \otimes_{RH} R  \arrow[d, "\phi"] \\
	P_n \arrow[r] & \cdots \arrow[r] & P_0 \arrow[r] & R.
	\end{tikzcd}
	\end{equation} 
	is a commutative diagram, see~\cite[pg.22, Lemma 7.4]{Br94}.
	
Let $M_*=(M_i)$ be the mapping cylinder of the chain map $f=(f_i)$ where $f_i$ is the $RG$-morphism  defined above for $0\leq i\leq n$, $f_{n+1}$ is the morphism $0\to P_{n+1}$, and $f_i$ is the morphism $0\to 0$ for any other value of $i$.  
	
	Observe that  \[ M_i = P_i \oplus (RG \otimes_{RH} Q_i) \oplus (RG \otimes_{RH} Q_{i-1})\] for $1\leq i\leq n$, 
	$M_0=P_0 \oplus (RG \otimes_{RH} Q_0) \oplus 0 $, 
	$M_{n+1} = P_{n+1} \oplus 0 \oplus (RG \otimes_{RH} Q_n)$, and $M_i=0$ for any other value of $i$. Hence all $M_i$ are finitely generated and projective. 
	
	Let $P_*=(P_i)$ be the chain complex induced by~\eqref{eq1}, where $P_i=0$ for $i>n+1$ and $i<0$. Observe that $P_*$ is the target of the chain map $f$.  Since $P_*$ and $M_*$ are chain homotopic, 
	\begin{equation} \nonumber  
	0 \rightarrow M_{n+1}\rightarrow M_n \rightarrow \cdots \rightarrow M_0 \rightarrow R\rightarrow 0,
	\end{equation}
	is a projective resolution of finite type of the trivial $RG$-module $R$.

	\begin{step}
		Definition of the injective $RH$-morphisms $\imath_i\colon Q_i \to M_i$.
	\end{step}
	
	We have the following commutative diagram of $RH$-modules
	\begin{equation}\label{eq:04}
	\begin{tikzcd}[cells={nodes={minimum height=2em}}]
	Q_n \arrow[r]\arrow[d,"\tau_n"] & \cdots \arrow[r] & Q_1 \arrow[r] \arrow[d, "\tau_1"] &  Q_0   \arrow[d, "\tau_0"]   \\
	RG \otimes_{RH} Q_n \arrow[r]\arrow[d, "\jmath_n"] & \cdots \arrow[r] & RG\otimes_{RH}Q_1 \arrow[r] \arrow[d, "\jmath_1"] & RG \otimes_{RH} Q_0   \arrow[d, "\jmath_0" ]    \\
	M_n \arrow[r] & \cdots \arrow[r] & M_1 \arrow[r] & M_0  .
	\end{tikzcd}
	\end{equation} 
	where $\tau_k \colon Q_k \to RG \otimes_{RH} Q_k$ is the natural inclusion given by $q \mapsto e \otimes q$  (here $e$ denotes the identity element of $G$), and the vertical arrows $\jmath_i\colon RG\otimes_{RH} Q_i \to M_i$ are the natural inclusions. Then define \[ \imath_i=\jmath_i\circ \tau_i \]
for $0\leq i\leq n$, and observe that they are  injective $RH$-morphisms. 
	
	\begin{step}
		Verifying commutative diagram~\eqref{eq:03}.
	\end{step}
	
	In view of the commutative diagram~\eqref{eq:04}, we only need to verify that if $H_0(Q)$ and $H_0(M)$ denote the cokernels of  $Q_1\to Q_0$  and  $M_1\to M_0$ respectively, then the $RH$-morphism $H(\imath_0)\colon H_0(Q) \to H_0(M)$ induced by $\imath_0$ is an isomorphism. 
	
Before the argument, we remark that this is not immediate, it depends on the choice of the $RG$-morphism $f_0$; the available choices for $f_0$ depend on the choice of the $RG$-morphism $\phi\colon RG\otimes_{RH} R \to R$; our choice is  defined by~\eqref{eq12}. 

Let $H_0(P)$ denote the cokernel of $P_1\to P_0$. Let $\tau_{-1}\colon R \to RG\otimes_{RH} R$ be defined by $r\mapsto e\otimes r$ where $e$ denotes the identity element of $G$. Then $\phi \circ \tau_{-1}$ is the identity map on $R$. It follows that the induced $RH$-morphism $H_0(f_0\circ\tau_0) \colon H_0(Q)\to H_0(P)$ is an isomorphism. 	Since $\kappa \colon M_* \to P_*$ given by $(p,q,q') \mapsto p + f(q)$ is a chain homotopy equivalence, $H(\kappa_0) \colon H_0(M) \to H_0(P)$ is an isomorphism.  Observe that $H(f_0 \circ \tau_0)$ equals $H(\kappa_0) \circ H(\imath_0)$ and hence $H(\imath_0)$ is an isomorphism.
	
	\begin{step}
		The exact sequence~\eqref{eq:07} splits, and each $S_i$ is a projective $RH$-module. 
	\end{step}
This is immediate since $\imath_i \colon Q_i \to M_i$ is the inclusion of a direct summand of $M_i$ as an $RH$-module. Since restriction of scalars preserves projectiveness, $M_i$ is projective as an $RH$-module and hence $S_i$ is projective as well. 
\end{proof}

\setcounter{step}{0}
\begin{proof}[Proof of Theorem~\ref{thm:main2}]
 Consider projective resolutions as~\eqref{eq:02} and~\eqref{eq:01} as well as $RH$-morphisms $\imath_i\colon Q_i\to M_i$ as described in Proposition~\ref{prop:MappingCylinder}.  
	
	Let $M_*=(M_i, \delta_i^M)$ denote the chain complex induced by~\eqref{eq:01}, with the assumption that $M_i=0$ for $i>n$ and $i<0$. Analogously, let $Q_*=(Q_i, \delta_i^Q)$ be the chain complex induced by~\eqref{eq:02}, with the assumption that $Q_i=0$ for $i>n$ and $i<0$. Observe that we are not using the modules $Q_{n+1}$ and $M_{n+1}$ in the definition of $Q_*$ and $M_*$. Let $S_*$  be the quotient chain complex $M_*/Q_*$. Consider the induced chain map $\imath=(\imath_i)\colon Q_*\to M_*$. 
	
	We use the following notation. The kernel of $\delta_n^Q$ is denoted by $Z_n(Q)$. The $n$-homology group of the complex $Q_*$ is denoted by $H_n(Q)$. Analogous notation is used for the other chain complexes.
	
	\begin{step}\label{step:newretraction}
		The induced sequence 
		\begin{equation} \label{eq:20}
		\begin{tikzcd}[cells={nodes={minimum height=2em}}]
		0 \arrow[r] & Z_n(Q) \arrow[r, "\imath_n"] & Z_n(M) \arrow[r] & Z_n(S) \arrow[r] & 0
		\end{tikzcd}
		\end{equation}
is exact and satisfies:		
\begin{itemize}
		\item  $Z_n(Q)$ is a finitely generated $RH$-module. 
		\item  $Z_n(M)$ is a finitely generated and projective $RG$-module.
		\item  $Z_n(Q)$ is a direct summand of $Z_n(M)$ as an $RH$-module.
	\end{itemize}
 
	\end{step}
	
	Observe that $H_{n+1}(Q)$ and $H_{n-1}(Q)$ are both trivial. The short exact sequence of chain complexes of $RH$-modules
	\begin{equation}\label{eq:09}
	\begin{tikzcd}[cells={nodes={minimum height=2em}}]
	0 \arrow[r] & Q_* \arrow[r, "\imath"] & M_* \arrow[r] & S_* \arrow[r] & 0
	\end{tikzcd}
	\end{equation}
	induces a long exact sequence
	\begin{equation} 
	\begin{tikzcd}[cells={nodes={minimum height=2em}}]
	0 \arrow[r] & H_n(Q) \arrow[r, "\imath_n"] & H_n(M) \arrow[r] & H_n(S) \arrow[r] & 0
	\end{tikzcd}
	\end{equation}
	which is precisely~\eqref{eq:20}.

	The $RH$-module $Z_n(Q)$ is  finitely generated since $Q_{n+1}$ is a finitely generated $RH$-module and $\delta_{n+1}^Q$ maps $Q_{n+1}$ onto $Z_n(Q)$. 
	
	That $Z_n(M)$ is a finitely generated and  projective $RG$-module follows from a direct application of Schanuel's lemma~\cite[pg.193, Lemma 4.4]{Br94} to the exact  sequences~\eqref{eq:01} and 
	\begin{equation}\label{eqa}
	0 \rightarrow Z_n(M )\rightarrow M_n \rightarrow \cdots \rightarrow M_0 \rightarrow R\rightarrow 0.
	\end{equation}
	Finally, to show  that $Z_n(Q)$ is a direct summand of $Z_n(M)$ as an $RH$-module, we argue that 
that $Z_n(S)$ is projective $RH$-module. Consider the sequence of $RH$-modules induced by $S_*$
	\begin{equation}\label{eq8}
	0 \rightarrow Z_n(S) \rightarrow S_n \rightarrow \cdots \rightarrow S_0 \rightarrow 0.
	\end{equation}
	Note that this sequence is exact by observing the long exact sequence of homologies induced by~\eqref{eq:09}. Indeed, $H_i(Q)$ and $H_i(M)$ are trivial for $0<i<n$, and $H(\imath)\colon H_0(Q) \to H_0(M)$ is an isomorphism by~\eqref{eq:03}. Since each $S_i$ is projective, exactness of~\eqref{eq8} implies that $Z_n(S)$ is projective. 
	
	\begin{step}
		$FV_{H, R}^{n+1} \preceq FV_{G, R}^{n+1}$.
	\end{step}
	Let $\|\cdot\|_{M_n}$ and $\|\cdot \|_{Z_n(M)}$ denote filling norms on the $RG$-modules $M_n$ and $Z_n(M)$ respectively. Similarly, let $\|\cdot\|_{Q_n}$ and $\|\cdot \|_{Z_n(Q)}$ denote filling norms on $RH$-modules $Q_n$ and $Z_n(Q)$. 
	For the map $Z_n(Q)  \xrightarrow{\imath_n} Z_n(M)$, by Lemma \ref{lem: integral part H} there exist integral parts $K$ and $K'$ of $Z_n(Q)$ and $Z_n(M)$ respectively, such that $K$ maps into $K'$ by the morphism $\imath$. \par

Since $\imath\colon Q_n \to M_n$ is the inclusion of a direct summand of $M_n$ as an $RH$-module, and $M_n$ is a projective $RH$-module, 
Lemma~\ref{prop:RetractionLemma} implies that $\| \cdot \|_{M_n} \sim   \| \cdot\|_{Q_n}$ on $Q_n$. In particular,  there is a constant $C_0$ such that 
	\[ \| \imath_n (\gamma)\|_{M_n} \leq C_0 \| \gamma \|_{Q_n} \]
for every $\gamma \in Q_n$.  

By  Step~\ref{step:newretraction},  $\imath_n\colon Z_n(Q)  \to Z_n(M)$ is the inclusion of a direct summand of $Z_n(M)$ as an $RH$-module, and $Z_n(M)$ is a projective $RH$-module. Lemma~\ref{prop:RetractionLemma} implies $\| \cdot \|_{Z_n(M)} \sim   \| \cdot\|_{Z_n(Q)}$ on $Z_n(Q)$. Hence there is $C_1>0$  such that 
	\begin{equation} \nonumber
	\|\gamma \|_{Z_n(Q)} \leq C_1\|\imath_n (\gamma)\|_{Z_n(M)}
	\end{equation}
for every $\gamma \in Z_n(M)$, and $\rho \circ \imath$ is identity on $Z_n(Q)$.

Let $k \in \mathbb{N}$ and $\gamma \in K \subseteq Z(Q_n)$ such that $\|\gamma\|_{Q_n} \leq k$.
	Then
\[  \|\gamma\|_{Z_n(Q)}  \leq C_1 \|\imath_n(\gamma)\|_{Z_n(M)}  \leq  C_1\FV_{G,R}^{n+1}( \|\imath_n(\gamma)\|_{M_n}) \leq C_1\FV_{G,R}^{n+1}( C_0\|\gamma\|_{Q_n}) \]
Therefore 
	$\FV_{H, R}^{n+1}(k)\leq C_1\FV_{G,R}^{n+1}(C_0k)$ for every $k$.
\end{proof}

\bibliographystyle{plain}
\bibliography{ref}

\end{document}